\documentclass{amsart}
\usepackage{babel}
\usepackage{amsmath, amssymb}
\usepackage{xcolor}
\usepackage{mathrsfs}
\usepackage{mathtools}
\usepackage[T1]{fontenc}
\usepackage{graphicx}
\usepackage{mathtools}
\usepackage{amssymb}
\usepackage{amsthm}
\usepackage{thmtools}
\usepackage{xcolor}
\usepackage{hyperref}
\usepackage{tikz,pgfplots}
\usepackage{tikz-cd}
\usetikzlibrary{calc,through,intersections,patterns,patterns.meta,arrows.meta,spy}

\newtheorem{theorem}{Theorem}[section]
\newtheorem{lemma}[theorem]{Lemma}
\newtheorem{corollary}[theorem]{Corollary}

\theoremstyle{definition}
\newtheorem{definition}[theorem]{Definition}
\newtheorem{example}[theorem]{Example}

\theoremstyle{remark}
\newtheorem{remark}[theorem]{Remark}

\numberwithin{equation}{section}

\newcommand{\diver}{{{\mathrm{div}}}}

\pgfplotsset{compat=1.8}

\begin{document}

\title[A Supersymmetric Perspective]{ Dirichlet--Neumann duality for the Basic Spectrum of Riemannian Submersions: A Supersymmetric Perspective}

\author{Vicent Gimeno i Garcia}
\address{Department of Mathematics, Universitat Jaume I-IMAC, E-12071, 
Castell\'{o}, Spain}
\email{gimenov@mat.uji.es}

\author{Paulo Henryque da Costa Silva}
\address{Department of Mathematics, Universidade Federal do Cear\'{a}, 
60440-900, Fortaleza, Brazil}
\email{paulohenryque@alu.ufc.br}

\subjclass[2020]{Primary 58C40, 35P15, 81Q60, 58J50}

\keywords{Riemannian Submersion, Weighted Laplacian, Dirichlet Spectrum, Neumann Spectrum, Supersymmetric Quantum Mechanics.}

\date{\today}

%%%%%%%%%%%%%%%%%%%%%%%%%%%%%%%%%%%%%%%%%%%%%%%%%%%%%%%%%%%%%%%%%%%%%%%%%%%%%%%%%%%%%%%%%%%%%%%%%%%%%%%%%%%%%%%%%%%%%%%%%%%%%%%%%%%%%%%%%%%%%%%%
\begin{abstract}
This manuscript investigates the spectral geometry of Riemannian submersions whose fibers have a basic mean curvature. By restricting the Laplace--Beltrami operator to the space of basic functions, we reduce the spectral problem on $M$ to the spectral problem for a weighted Laplacian on the base manifold, where the weight is determined by the fiber-volume function $S$. We derive a summation formula for the reciprocal of the basic Dirichlet eigenvalues (Basel-type series). Furthermore, using the framework of Supersymmetric Quantum Mechanics (SUSYQM), we establish a supersym\-me\-tric duality relating the basic Dirichlet and Neumann spectra under the trans\-for\-ma\-tion $S \mapsto 1/S$. 
\end{abstract}
%%%%%%%%%%%%%%%%%%%%%%%%%%%%%%%%%%%%%%%%%%%%%%%%%%%%%%%%%%%%%%%%%%%%%%%%%%%%%%%%%%%%%%%%%%%%%%%%%%%%%%%%%%%%%%%%%%%%%%%%%%%%%%%%%%%%%%%%%%%%%%%%

\maketitle

%%%%%%%%%%%%%%%%%%%%%%%%%%%%%%%%%%%%%%%%%%%%%%%%%%%%%%%%%%%%%%%%%%%%%%%%%%%%%%%%%%%%%%%%%%%%%%%%%%%%%%%%%%%%%%%%%%%%%%%%%%%%%%%%%%%%%%%%%%%%%%%%
\section{Introduction}\label{sec:intro}
%%%%%%%%%%%%%%%%%%%%%%%%%%%%%%%%%%%%%%%%%%%%%%%%%%%%%%%%%%%%%%%%%%%%%%%%%%%%%%%%%%%%%%%%%%%%%%%%%%%%%%%%%%%%%%%%%%%%%%%%%%%%%%%%%%%%%%%%%%%%%%%%

The interplay between the geometric properties of a manifold and the spectrum of its Laplacian operator $\triangle={\rm div}\circ \nabla$ is a central theme in spectral geometry \cite{Chavel, Kac, Pol2, YauReview}. In this work, we apply a theoretical framework inspired by Supersymmetric Quantum Mechanics (SUSYQM), as developed in the seminal works of Witten \cite{Witten1982} and Cooper et al. \cite{CooperKhareSukhatme1995}, to study the basic spectrum of a complete Riemannian manifold $(M, g)$. More precisely, we consider manifolds admitting a Riemannian submersion 
\begin{equation}
\pi\colon M \to B,
\end{equation}
where the fibers possess a basic mean curvature vector $\vec{H}$, meaning that the mean curvature vector field is the horizontal lift of a vector field on the base manifold.

The primary focus of our research is the set of eigenvalues associated with basic eigenfunctions, denoted by $\sigma_{\rm basic}$. These eigenfunctions are defined as the pullbacks of functions $\widetilde{f}: B \to \mathbb{R}$ from the base. The restriction of the Laplacian to this subspace, called the basic Laplacian $\triangle_{\rm basic}$, allows a significant reduction in the complexity of the spectral problem. When the fibers are compact without boundary, their finite volume can be denoted by $S(p)=\operatorname{vol}(\pi^{-1}(p))$, and the problem becomes equivalent to studying a weighted Laplacian on the base $B$ with weight $S$.

Using a weighted decomposition inspired by the SUSYQM technique, we prove that the following second-order differential operator 
$$
\mathscr{L}_S^{-}:\mathfrak{X}(B)\to \mathfrak{X}(B),\quad X\mapsto\mathscr{L}_S^{-}X=-S \nabla\left( \frac{\diver(X)}{S} \right),
$$ 
defined on vector fields $\mathfrak{X}(B)$ of the base manifold $B$, together with the constrained boundary problem
\begin{equation}\label{constrained}
\left\{
\begin{aligned}
\mathscr{L}_S^{-}X &= \lambda X \quad &&\text{in } D\subset B,\\
\diver(X) &= 0 \quad &&\text{on } \partial D,
\end{aligned}
\right.
\end{equation}
admits a finite-dimensional space of eigenfields for $\lambda>0$ and for $D$ a precompact domain of $B$ with smooth boundary $\partial D$. (This constrained eigenvalue problem is well-posed within the subspace of fields satisfying the divergence-free boundary condition.) Furthermore, obtaining the spectrum of the weighted Laplacian, and hence the basic spectrum, with Dirichlet boundary conditions is equivalent to finding the eigenvalues for \eqref{constrained}. Indeed, there is a map from the eigenvectors of the weighted Laplacian problem with Dirichlet boundary to eigenfields of \eqref{constrained}.

\medskip
\noindent\textbf{Main results of this paper.} 
The strongest spectral consequences of this type of supersymmetric factorization arise in the one-dimensional reduction because $\mathfrak{X}(B)\simeq C^\infty(B)$. Hence, the weighted basic Laplacian becomes a Sturm--Liouville operator, and the duality of boundary conditions becomes a Dirichlet--Neumann duality. In particular, when the base manifold is $1$-dimensional, this leads to a formal duality between Dirichlet and Neumann spectra under the transformation $S(t) \mapsto 1/S(t)$ (see Figure \ref{Fig:DNintro}), and we can state the following theorem (which is one of the main novelties of this work).

\begin{figure}[htb!]
\centering
	\begin{tikzpicture}[scale=0.58,>=stealth]
		
		\begin{scope}[scale=1.7,x=1.6cm,yshift=1.4cm]
			\def\u{1}
			\def\v{1}
			
			\draw[domain=-3.5:3.5,line width=1pt,samples=500] plot ({\x},{exp(-pow(\x,2))});
			\draw[domain=-3.5:3.5,line width=1pt,samples=500] plot ({\x},{exp(-pow(\x,2))});
			\draw[domain=-3.5:3.5,line width=1pt,samples=500] plot ({\x},{-exp(-pow(\x,2))});
			\draw[domain=-3.5:3.5,line width=1pt,samples=500] plot ({\x},{-exp(-pow(\x,2))});

			\draw[blue,line width=1pt] (0,1) arc (90:270:0.3cm and \v cm);
			\draw[blue,dashed] (0,1) arc (90:-90:0.3cm and \v cm);
			
			\def\k{1}
			\draw[blue, line width=1pt,samples=500] (\k,{exp(-pow(\k,2))}) arc (90:270:0.1cm and 0.36*\v cm);
			\draw[blue, dashed,samples=500] (\k,{exp(-pow(\k,2))}) arc (90:-90:0.1cm and 0.36*\v cm);
			
			\draw[blue, line width=1pt,samples=500] (-\k,{exp(-pow(\k,2))}) arc (90:270:0.1cm and 0.36*\v cm);
			\draw[blue, dashed] (-\k,{exp(-pow(\k,2))}) arc (90:-90:0.1cm and 0.36*\v cm);
			
			\def\t{0.7}
			\draw[blue, line width=1pt,samples=500] (\t,{exp(-pow(\t,2))}) arc (0:-180:1.1*\v cm and 0.15cm);
			\draw[blue, dashed,samples=500] (\t,{exp(-pow(\t,2))}) arc (0:180:1.1*\v cm and 0.15cm);
			
			\draw[blue, line width=1pt,samples=500] (\t,{-exp(-pow(\t,2))}) arc (0:-180:1.1*\v cm and 0.15cm);
			\draw[blue, dashed,samples=500] (\t,{-exp(-pow(\t,2))}) arc (0:180:1.1*\v cm and 0.15cm);
		\end{scope}
		     
			\draw[->] (0,-0.1) -- (0,-2) node[right, pos=0.5] {$\pi_{1}$};
			\draw (-9.8,-2.5) -- (9.8,-2.5);
			
			\draw[line width=1pt] (0,-2.4) -- (0,-2.6) node[below] {$0$};
			
			\draw[<-] (0,-3.4) -- (0,-5.2)   node[right, pos=0.5] {$\pi_{2}$} ;
			
			\node (aux1) at (6,0.2) {$S_{M_1}(t) = \frac{\displaystyle 1}{ \displaystyle S_{M_2}(t)}  $};

            \node (aux1) at (-9.2,3.0) {$M_1$};

            \node (aux1) at (-9.2,-3.7) {$M_2$};

		\begin{scope}[yshift=-5.5cm]
			\def\q{9.7}
			\draw[dashed] (\q,1) arc (90:270:0.2cm and 1.5 cm);
			\draw[line width=1pt] (\q,1) arc (90:-90:0.2cm and 1.5 cm);
			
			\draw[line width=1pt] (-\q,1) arc (90:270:0.2cm and 1.5 cm);
			\draw[dashed] (-\q,1) arc (90:-90:0.2cm and 1.5 cm);
			
			\draw[line width=1pt] (0,-0.3) to[out=0,in=200] (\q,1);
			\draw[line width=1pt] (0,-0.3) to[out=180,in=-20] (-\q,1);
			
			\draw[line width=1pt] (0,-0.6) to[out=0,in=160] (\q,-1.98);
			\draw[line width=1pt] (0,-0.6) to[out=180,in=20] (-\q,-1.98);
		\end{scope}	
	\end{tikzpicture}
    \caption{\small{Schematic representation of the correspondence between nonzero Dirichlet and Neumann eigenvalues induced by the transformation $S(t) \mapsto 1/S(t)$}.}
    \label{Fig:DNintro}
\end{figure}

\begin{theorem}\label{DNcorres}
Let $\pi_1: M_1\to \mathbb{R}$ and $\pi_2: M_2\to \mathbb{R}$ be two Riemannian submersions with compact fibers of basic mean curvature.  Let $\Omega_1:=\pi^{-1}_1([a,b])$, $\Omega_2:=\pi^{-1}_2([a,b])$  and suppose that the fiber volumes satisfy 
$$
S_1(t)=\frac{1}{S_2(t)},\quad \forall t\in[a,b],
$$
where $
S_1(t)={\rm vol}(\pi_1^{-1}(t))
$ and $
S_2(t)={\rm vol}(\pi_2^{-1}(t))
$. 
Then
$$
\sigma_{\rm basic}^{\mathcal{D}}(\Omega_1)=\sigma_{\rm basic}^\mathcal{N}(\Omega_2)-\{0\},\quad \sigma_{\rm basic}^{\mathcal{D}}(\Omega_2)=\sigma_{\rm basic}^\mathcal{N}(\Omega_1)-\{0\},
$$
where $\sigma_{\rm basic}^{\mathcal{D}}(\Omega)$ denotes the Dirichlet basic spectrum of $\Omega$ and $\sigma_{\rm basic}^\mathcal{N}(\Omega)$ denotes the Neumann basic spectrum of $\Omega$.
\end{theorem}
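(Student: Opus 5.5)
The plan is to reduce both basic spectral problems to Sturm--Liouville problems on $[a,b]$ and then intertwine them with the first-order operators coming from the supersymmetric factorization.

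\textbf{Reduction to the base.} For a basic function $f=\widetilde f\circ\pi$, basic mean curvature gives $\triangle f=\widetilde f''+\frac{S'}{S}\widetilde f'=\frac{1}{S}(S\widetilde f')'$, where $S$ is the fiber volume. Hence the basic Dirichlet (resp.\ Neumann) eigenvalues of $\Omega=\pi^{-1}([a,b])$ are exactly the eigenvalues $\lambda$ of
\[
-\frac{1}{S}(S u')'=\lambda u \text{ on } [a,b],
\]
with $u(a)=u(b)=0$ (resp.\ $u'(a)=u'(b)=0$). This uses that $\partial\Omega=\pi^{-1}(\{a,b\})$ and that the normal derivative of $f$ there is $\pm\widetilde f'$. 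By Sturm--Liouville theory, in the weighted space $L^2([a,b],S\,dt)$ both problems have simple, discrete spectrum. The Dirichlet spectrum is strictly positive, while the Neumann spectrum contains $0$ with constant eigenfunctions and is otherwise positive.

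\textbf{Supersymmetric intertwining.} Write $\mathcal{L}_S u=-\frac{1}{S}(Su')'=A^*A\,u$ with $A u=u'$ and $A^*v=-\frac{1}{S}(Sv)'$. Set $S_1=S$ and $S_2=1/S$.
\begin{itemize}
\item \emph{Dirichlet for $S_1$ to Neumann for $S_2$.} Let $u$ be a Dirichlet eigenfunction of $\mathcal{L}_{S_1}$ with eigenvalue $\lambda>0$, and put $w=S_1u'$.
 \begin{itemize}
 \item Compute $(S_2w')'=(u'')'\cdot$ appropriately; concretely, $S_2w'=\frac{1}{S_1}(S_1u')'=-\lambda u$.
 \item Differentiating gives $(S_2w')'=-\lambda u'=-\lambda S_2 w$, so $\mathcal{L}_{S_2}w=\lambda w$.
 \item At the endpoints, $w'=S_1(-\lambda u)=0$, so $w$ satisfies the Neumann condition.
 \item $w\neq0$: otherwise $u'\equiv0$, hence $\lambda u=0$, forcing $u\equiv0$.
 \end{itemize}
\item \emph{Neumann for $S_2$ to Dirichlet for $S_1$.} Let $w$ be a Neumann eigenfunction for $S_2$ with $\lambda\neq0$, and put $u=-\frac{1}{\lambda}S_2w'$.
 \begin{itemize}
 \item The Neumann condition gives $u(a)=u(b)=0$.
 \item $(S_2w')'=-\lambda S_2 w$ yields $u'=S_2w$, i.e.\ $S_1u'=w$.
 \item Then $(S_1u')'=w'=-\lambda S_1u$, so $u$ is a Dirichlet eigenfunction for $S_1$.
 \end{itemize}
\end{itemize}
These two maps are mutually inverse up to scaling, which gives $\sigma^{\mathcal D}_{\rm basic}(\Omega_1)=\sigma^{\mathcal N}_{\rm basic}(\Omega_2)\setminus\{0\}$. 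Since $\lambda=0$ appears for Neumann (constants) but never for Dirichlet, removing $0$ is exactly what is needed. The second identity follows by swapping the roles of $S_1$ and $S_2$, since $1/(1/S)=S$.

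\textbf{Main obstacle.} The step I expect to need the most care is the reduction itself. One must justify that the Dirichlet and Neumann basic spectra of $\Omega$ coincide with the spectra of the self-adjoint Sturm--Liouville realizations above. This involves two points:
\begin{itemize}
\item identifying $\triangle$ on basic functions as $\frac1S(S\widetilde f')'$, which is where basic mean curvature enters via $S'/S$ equalling the mean curvature up to sign;
\item checking that the boundary conditions pass down correctly.
\end{itemize}
Regularity and positivity of $S$ on the compact interval $[a,b]$ ensure that $S_2=1/S_1$ is smooth and positive, so the intertwining computations are legitimate. Multiplicities are preserved automatically, since all eigenvalues are simple in one dimension.
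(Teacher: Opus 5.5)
Your proof is correct and is essentially the paper's argument. Your maps $u\mapsto S_1u'$ and $w\mapsto -\lambda^{-1}S_2w'$ are, up to sign and normalization, the intertwiners $\mathcal{A}_{S_1}$ and $\mathcal{A}_{S_1}^\dag$ of Theorem \ref{DirichletNeumannProblem}, written in one-dimensional coordinates where $\mathscr{L}_S^+=\mathscr{L}_{1/S}^-$ and the divergence-free condition becomes the Neumann condition. Your explicit check that the boundary conditions pass to the base (normal derivative $\pm\widetilde f'$ on $\pi^{-1}(\{a,b\})$) fills in a step the paper leaves implicit; the stray fragment ``$(S_2w')'=(u'')'\cdot$ appropriately'' should simply be deleted.
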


Spectral comparisons between the Dirichlet and Neumann problems have been extensively studied; see, for instance, \cite{LevineWeinberger1986, Friedlander1991, Hansson2008, FrankLaptev2010, ArendtMazzeo2012, Rohleder2021Schrodinger, Rohleder2025, HuaMunchZhang2026}. In the spirit of Payne's inequalities \cite{Payne1955} relating Dirichlet and Neumann eigenvalues, Theorem \ref{DNcorres} yields the following result

\begin{corollary}\label{PAYNE}
Let $\pi: M\to \mathbb{R}$ be a Riemannian submersion with compact fibers of basic mean curvature. Let $\Omega=\pi^{-1}([a,b])$ with $\partial \Omega=\pi^{-1}(a)\cup\pi^{-1}(b)$. The following assertions hold
\begin{enumerate}
    \item If $S$ is log-convex, i.e, $(\log{S(t)})''\geq0$ then  $\lambda_{2,{\text{basic}}}^{\mathcal{N}}(\Omega) \leq \lambda_{1, \text{basic}}^{\mathcal{D}}(\Omega)$. This implies $\lambda_{2}^{\mathcal{N}}(\Omega) \leq \lambda_{1}^{\mathcal{D}}(\Omega)$.
    \item If $S$ is log-concave, i.e, $(\log{S(t)})'' \leq 0$ then  $\lambda_{1,{\text{basic}}}^{\mathcal{D}}(\Omega) \leq \lambda_{2, \text{basic}}^{\mathcal{N}}(\Omega)$.
\end{enumerate}
Moreover, the equality $\lambda_{2,{\text{basic}}}^{\mathcal{N}}(\Omega) = \lambda_{1, \text{basic}}^{\mathcal{D}}(\Omega)$ 
in the above items holds if and only if $(\log{S(t)})''=0$. 
\end{corollary}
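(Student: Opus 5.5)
The plan is to reduce everything to the one-dimensional Sturm--Liouville problem and then compare two Schrödinger operators whose potentials differ exactly by $(\log S)''$. On $\Omega=\pi^{-1}([a,b])$, basic eigenfunctions $u\circ\pi$ correspond to solutions on $[a,b]$ of
$$
\mathscr{L}_S u:=-\frac{1}{S}(S u')'=\lambda u,
$$
with $u(a)=u(b)=0$ in the Dirichlet case and $u'(a)=u'(b)=0$ in the Neumann case. This is the weighted-Laplacian reduction described in the introduction. Write $q=(\log S)'$.

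\textbf{Step 1 (duality).} By Theorem~\ref{DNcorres}, applied with $S_1=S$ and $S_2=1/S$, the nonzero Neumann spectrum for weight $S$ equals the Dirichlet spectrum for weight $1/S$. Since $\lambda_{1,\mathrm{basic}}^{\mathcal N}=0$ is simple (constants), and both spectra are simple and ordered in this 1D setting, this gives
$$
\lambda_{2,\mathrm{basic}}^{\mathcal N}(S)=\lambda_1^{\mathcal D}(1/S).
$$
It therefore suffices to compare the first Dirichlet eigenvalues for the weights $S$ and $1/S$.

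\textbf{Step 2 (Liouville transform).} Put $u=S^{-1/2}\psi$. Since $S>0$ on $[a,b]$, Dirichlet conditions for $u$ and $\psi$ coincide. A direct computation gives
$$
\mathscr{L}_S u=S^{-1/2}\bigl(-\psi''+V_S\psi\bigr),\qquad V_S=\frac{(\sqrt S)''}{\sqrt S}=\tfrac12 q'+\tfrac14 q^2 .
$$
Replacing $S$ by $1/S$ replaces $q$ by $-q$, so $V_{1/S}=-\tfrac12 q'+\tfrac14 q^2$, and hence
$$
V_S-V_{1/S}=q'=(\log S)''.
$$
Thus $\lambda_1^{\mathcal D}(S)$ and $\lambda_1^{\mathcal D}(1/S)$ are the Dirichlet ground-state energies of $-\frac{d^2}{dt^2}+V_S$ and $-\frac{d^2}{dt^2}+V_{1/S}$ on $[a,b]$.

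\textbf{Step 3 (comparison and equality).} Compare the two ground states via the Rayleigh quotient $\int(\psi'^2+V\psi^2)/\int\psi^2$ over $H^1_0(a,b)$.
\begin{enumerate}
\item If $(\log S)''\ge 0$, then $V_S\ge V_{1/S}$. Hence $\lambda_{2,\mathrm{basic}}^{\mathcal N}=\lambda_1^{\mathcal D}(1/S)\le\lambda_1^{\mathcal D}(S)=\lambda_{1,\mathrm{basic}}^{\mathcal D}$.
\item If $(\log S)''\le 0$, the inequality reverses.
\end{enumerate}
For equality, insert the ground state $\psi_S$ of the larger operator as a test function for the smaller one. This shows that the difference of the two eigenvalues is at least $\int q'\psi_S^2/\int\psi_S^2$. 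The ground state is strictly positive on $(a,b)$, and $q'$ is continuous and of one sign. So equality forces $q'\equiv 0$. Conversely, if $q'\equiv0$ then $V_S=V_{1/S}$ and the eigenvalues coincide.

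\textbf{Step 4 (full spectrum).} To pass to $\lambda_2^{\mathcal N}(\Omega)\le\lambda_1^{\mathcal D}(\Omega)$, I need two facts.
\begin{enumerate}
\item $\lambda_2^{\mathcal N}(\Omega)\le\lambda_{2,\mathrm{basic}}^{\mathcal N}(\Omega)$. This holds by min--max, since basic functions form a subspace of the admissible functions and $0$ is the simple bottom of both spectra.
\item $\lambda_1^{\mathcal D}(\Omega)=\lambda_{1,\mathrm{basic}}^{\mathcal D}(\Omega)$, i.e.\ the ground state of $\Omega$ is basic.
\end{enumerate}
Item 2 is the step I expect to be the main obstacle. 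I would argue it via the fiber-averaging operator $f\mapsto \frac{1}{S}\int_{\pi^{-1}(t)}f$. The basic-mean-curvature hypothesis should make this operator commute with $\triangle$, which is presumably the content of the earlier reduction to the weighted Laplacian. Averaging the positive, simple Dirichlet ground state then yields a nonzero basic eigenfunction with the same eigenvalue, so the ground state is basic.
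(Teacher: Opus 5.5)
Your proposal is correct and follows the paper's proof essentially step for step. Both use the duality $\lambda_{2,\mathrm{basic}}^{\mathcal N}(S)=\lambda_1^{\mathcal D}(1/S)$ from Theorem~\ref{DNcorres}, the Liouville transform with $\mathcal{V}_S-\mathcal{V}_{1/S}=(\log S)''$, min--max comparison of the potentials, the positive ground state as a test function for the equality case, and $\lambda_2^{\mathcal N}(\Omega)\le\lambda_{2,\mathrm{basic}}^{\mathcal N}(\Omega)$.

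The only difference is your Step 4(2). The paper simply observes that the lifted basic Dirichlet ground state is a nonnegative Dirichlet eigenfunction on $\Omega$, so it cannot be orthogonal to the positive ground state and hence belongs to $\lambda_1^{\mathcal D}(\Omega)$. That is quicker than fiber averaging. Your averaging argument is also valid, because the fiber average is exactly the orthogonal projection onto $\mathcal{E}_c$ in the reducing decomposition $L^2(M)=\mathcal{E}_c\oplus\mathcal{E}_0$ recalled in Section~\ref{sec:sub}.
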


\begin{remark} The exponential fiber-growth behavior illustrated in Figure \ref{Fig:DNintro} is, in a certain sense, optimal for our framework. Indeed, under the usual supersymmetric correspondence between the basic Laplacian $\triangle_{\rm basic}$ and the associated Schr\"{o}dinger operator, discussed in detail in Section \ref{sec:susy2}, exponential fiber growth corresponds precisely to the case where the effective Schr\"{o}dinger potential is constant.
\end{remark}

Furthermore, this paper addresses the eigenvalue problem on bounded domains $\Omega = \pi^{-1}([a,b])$ with Dirichlet or Neumann boundary conditions. We derive exact formulas for the sum of the reciprocals of the eigenvalues:

\begin{theorem}\label{teo:sum}
Let $\pi: M\to \mathbb{R}$ be a Riemannian submersion with compact fibers of basic mean curvature. Let $\Omega=\pi^{-1}([a,b])$ with $\partial \Omega=\pi^{-1}(a)\cup\pi^{-1}(b)$, 
and let 
 $$
 \sigma_{\rm basic}^\mathcal{D}(\Omega)=\left\{\lambda^\mathcal{D}_1(\Omega)>0, \lambda^\mathcal{D}_2(\Omega), \cdots \right\}
 $$
 be the basic spectrum of $\Omega$ with the Dirichlet boundary condition. Then:
\begin{equation*}
\sum_{k=1}^{\infty}\frac{1}{\lambda_k^{\mathcal{D}}(\Omega)}= \left( {\int_{a}^b\frac{dt}{S(t)}} \right)^{-1}\int_a^b S(x)\left(\int_a^x\frac{dt}{S(t)}\int_x^b\frac{dt}{S(t)}\right)dx < \infty.
\end{equation*}
\end{theorem}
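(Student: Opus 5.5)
The plan is to reduce the problem to a one-dimensional Sturm--Liouville problem, invert it with an explicit Green's function, and obtain the sum of reciprocal eigenvalues as the trace of that Green's operator.

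\textbf{Reduction.} For a basic function $u=f\circ\pi$ with basic mean curvature, the Laplacian reduces to the weighted operator on the base,
$$\triangle u=\frac{1}{S}\,(S f')'.$$
Hence the basic Dirichlet problem on $\Omega$ is equivalent to
$$-(Sf')'=\lambda S f \ \text{on } [a,b], \qquad f(a)=f(b)=0.$$
This operator is self-adjoint in $L^2([a,b],S\,dt)$, and $S>0$ is smooth on $[a,b]$. It is therefore a regular Sturm--Liouville problem, so the eigenvalues are simple, positive and increase to infinity. Their reciprocals are the eigenvalues of the inverse operator $G$.

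\textbf{Green's function.} Set $\phi(x)=\int_a^x dt/S$ and $\psi(x)=\int_x^b dt/S$. These solve $(Sf')'=0$ with $\phi(a)=0$ and $\psi(b)=0$. Their Wronskian-type constant is
$$S(\phi'\psi-\phi\psi')=\phi+\psi=\int_a^b \frac{dt}{S}=:L.$$
Solving $-(Sf')'=Sg$ with Dirichlet conditions gives $f=Gg$, where
$$Gg(x)=\int_a^b K(x,y)\,S(y)\,g(y)\,dy,\qquad K(x,y)=\frac{\phi(\min(x,y))\,\psi(\max(x,y))}{L}.$$
This follows from variation of parameters, and the Dirichlet conditions are checked directly.

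\textbf{Trace.} $K$ is symmetric, continuous and nonnegative, and $G$ is a positive self-adjoint compact operator on $L^2(S\,dt)$. Normalize the eigenfunctions by $\int f_k^2 S=1$. Mercer's theorem, applied to the continuous positive definite kernel $K$ with respect to the finite measure $S\,dt$, gives
$$K(x,y)=\sum_k \frac{f_k(x)f_k(y)}{\lambda_k},$$
with uniform convergence. Setting $x=y$ and integrating against $S$ gives
$$\sum_k\frac{1}{\lambda_k}=\int_a^b K(x,x)\,S(x)\,dx=\frac{1}{L}\int_a^b S(x)\,\phi(x)\,\psi(x)\,dx,$$
which is exactly the stated formula. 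Finiteness is immediate, since the integrand is continuous on a compact interval.

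\textbf{Main obstacle.} The delicate point is justifying the trace identity. Positivity of $K$ as a kernel follows from $G$ being the inverse of a positive operator. Mercer's theorem then applies in the weighted space, either directly or after conjugating by $\sqrt S$ to pass to unweighted $L^2$. An alternative route avoids Mercer. One computes the Hilbert--Schmidt norm $\sum_k \lambda_k^{-2}=\iint K^2 SS$, and then notes that $G=T^*T$ for the integral operator $T$ with kernel $\mathbf 1_{y<x}/S(x)$ (adjusted for the boundary). The trace is then $\|T\|_{HS}^2$, which also gives the formula.
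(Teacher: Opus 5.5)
Your proof is correct, and it takes a different route from the paper's. The paper never writes down the Green function. It quotes the trace identity $\sum_k \lambda_k^{-1}=\int_a^b g(x,x)\,d\mu(x)$ from Theorem 6.1 of \cite{bessa2016}. It then recovers only the diagonal value $g(y,y)$, indirectly: it squeezes $\mathrm{cap}\big((y-\epsilon,y+\epsilon),[a,b]\big)^{-1}$ between the infimum and supremum of $g(\cdot,y)$ on $\{y\pm\epsilon\}$ (Lemma \ref{grygcap}, from Grigor'yan), computes the one-dimensional weighted capacity explicitly, and lets $\epsilon\to 0$.

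You instead construct the full kernel $K(x,y)=\phi(\min(x,y))\,\psi(\max(x,y))/L$ by variation of parameters from the two $\mu$-harmonic functions. You get positivity from $\langle Gg,g\rangle_{L^2(S\,dt)}=\int_a^b S\,(f')^2\,dt$, and the trace identity from Mercer's theorem. Your factorization alternative also works: taking $T=D\circ G$ gives the kernel $\frac{1}{S(x)}\big(\mathbf 1_{y<x}-\psi(y)/L\big)$, whose Hilbert--Schmidt norm squared is exactly $\int_a^b S\,\phi\psi/L$.

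What your route buys: it is elementary and self-contained. It avoids both external citations and the $\epsilon\to0$ limit, which the paper justifies only by continuity and ``standard potential theory''. It also yields the whole kernel, so identities such as $\sum_k\lambda_k^{-2}=\iint K^2\,S(x)S(y)\,dx\,dy$ come for free.

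What the paper's route buys: it interprets the diagonal as $g(y,y)=\mathrm{cap}(\{y\},[a,b])^{-1}$. This ties the formula to potential theory and to Green-function estimates that remain available where no explicit kernel exists. In one dimension the two computations coincide, since $\phi(y)^{-1}+\psi(y)^{-1}=L/(\phi(y)\psi(y))$.
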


This theorem provides a geometric generalization of the classical Basel problem. For instance, if $S(t)=2\pi$ on $[0,L]$, we (re)deduce $\sum 1/k^2 = \pi^2/6$. For the case $S(t)=2\pi e^{-\alpha t}$, in this case, for $z = (\alpha L)/(2\pi)$ we obtain the classical Mittag-Leffler expansion for $\coth(z)$:
\begin{equation*}
\sum_{k=1}^{\infty} \frac{1}{z^2+k^2} = \frac{\pi}{2z} \coth{(\pi z)} -
\frac{1}{2z^2} \cdot
\end{equation*}

Finally, the aforementioned transformation $S(t)\mapsto 1/S(t) $ allows us, by means of Theorem \ref{teo:sum}, to compute the sum of the reciprocals of the Neumann eigenvalues, as stated in the following corollary:

\begin{corollary}\label{cor:sumneumann}
 Let $\pi: M\to \mathbb{R}$ be a Riemannian submersion with compact fibers of basic mean curvature. Let $\Omega=\pi^{-1}([a,b])$  and let 
 $$
 \sigma_{\rm basic}^\mathcal{N}(\Omega)=\left\{\lambda^\mathcal{N}_0(\Omega)=0,\lambda^\mathcal{N}_1(\Omega)>0, \lambda^\mathcal{N}_2(\Omega),\cdots \right\}
 $$
 be the basic spectrum of $\Omega$ with Neumann boundary conditions. Then
 $$
 \sum_{k=1}^{\infty}\frac{1}{\lambda_k^{\mathcal{N}}(\Omega)}= \left( {\int_{a}^bS(t)dt} \right)^{-1}\int_a^b \frac{1}{S(x)}\left(\int_a^xS(t)dt\int_x^bS(t)dt\right)dx<\infty.
 $$
\end{corollary}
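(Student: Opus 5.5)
The natural route is to apply Theorem~\ref{teo:sum} to an auxiliary submersion whose fiber volume is $1/S$, and then use the Dirichlet--Neumann duality of Theorem~\ref{DNcorres} to move the resulting sum over to the Neumann side. Fix the submersion $\pi\colon M\to\mathbb{R}$ with fiber volume $S>0$ on $[a,b]$.

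\textbf{Step 1 (auxiliary submersion).} I would construct a second Riemannian submersion $\pi_2\colon M_2\to\mathbb{R}$ with compact fibers of basic mean curvature and fiber volume $S_2=1/S$ on $[a,b]$. The simplest choice is a warped product $M_2=\mathbb{R}\times_{f}F$ with $F$ a fixed compact manifold, for instance a circle or the fiber of $M$ itself. Its fibers $\{t\}\times F$ are umbilic, with mean curvature depending only on $t$, so the mean curvature is basic. Choosing
\[
f(t)=\bigl(S(t)^{-1}/\operatorname{vol}(F)\bigr)^{1/\dim F}
\]
gives $S_2=1/S$, and this is smooth since $S>0$ is smooth.

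Alternatively, one can bypass any geometric model. By the reduction stated in the introduction, the basic spectrum depends only on the weighted Sturm--Liouville operator $u\mapsto -S^{-1}(Su')'$ on $[a,b]$. So the whole argument can be run on the level of $S$ alone.

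\textbf{Step 2 (duality).} Theorem~\ref{DNcorres} applied to $M_1=M$ and $M_2$ gives
\[
\sigma^{\mathcal N}_{\rm basic}(\Omega)\setminus\{0\}=\sigma^{\mathcal D}_{\rm basic}(\Omega_2).
\]
For the sums to match, this identity must hold with multiplicities. In one dimension every Dirichlet eigenvalue of a regular Sturm--Liouville problem is simple, and so is every Neumann eigenvalue. Moreover, the intertwining map behind the duality, $u\mapsto Su'$, is injective on nonconstant eigenfunctions. Hence the nonzero Neumann eigenvalues $\lambda^{\mathcal N}_k(\Omega)$, $k\ge1$, coincide term by term with the Dirichlet eigenvalues $\lambda^{\mathcal D}_k(\Omega_2)$.

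\textbf{Step 3 (apply the sum formula).} Theorem~\ref{teo:sum} applied to $\Omega_2$, with $S$ replaced by $1/S$, gives
\[
\sum_{k\ge1}\frac{1}{\lambda^{\mathcal N}_k(\Omega)}=\Bigl(\int_a^b S\,dt\Bigr)^{-1}\int_a^b\frac{1}{S(x)}\Bigl(\int_a^x S\,dt\int_x^b S\,dt\Bigr)dx .
\]
This sum is finite because its integrand is continuous on the compact interval $[a,b]$.

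\textbf{Main obstacle.} The delicate points are the multiplicity bookkeeping in Step 2 and the existence of $M_2$ in Step 1. To keep both clean, I would argue directly with the factorization. Set
\[
A=\frac{d}{dt},\qquad A^{*}=-S^{-1}\frac{d}{dt}\,S
\]
in $L^2(S\,dt)$. If $-S^{-1}(Su')'=\lambda u$ with $u'(a)=u'(b)=0$ and $\lambda\neq0$, then $w=Su'$ satisfies $w(a)=w(b)=0$ and $-S(w'/S)'=\lambda w$. This is exactly the Dirichlet problem for the weight $1/S$. The map is reversed by $u=-\lambda^{-1}w'/S$. This bijection on eigenspaces yields the spectral identity of Step 2 with multiplicities, independently of the geometric realization.
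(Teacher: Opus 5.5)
Your proposal is correct and follows essentially the paper's own route: realize the weight $1/S$, apply Theorem~\ref{teo:sum} to it, and transfer the sum to the Neumann side via the Dirichlet--Neumann duality of Theorem~\ref{DNcorres}. Your extra steps (the warped-product realization of $1/S$, simplicity of the one-dimensional eigenvalues, and the explicit eigenspace bijection $u\mapsto Su'$, $w\mapsto -\lambda^{-1}w'/S$) supply the multiplicity and existence details that the paper's one-line proof leaves implicit.
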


To the best of our knowledge, no previous work has established this duality nor the exact summation formulas in the context of Riemannian submersions with basic mean curvature. The closest results are the spectral estimates for submersions by Polymerakis \cite{Polymerakis2021} and the capacity-based trace formulas for one-dimensional weighted Laplacians (see \cite{bessa2016, GriExp}). Our contribution is to unify these ideas within a SUSYQM-inspired algebraic framework and to extract explicit spectral identities.

\subsection*{Outline of the paper:}

The structure of this paper is organized as follows. In Section \ref{sec:sub}, we review the fundamental concepts of Riemannian submersions and the definition of the basic Laplacian. Section \ref{sec:susy2} introduces the Supersymmetric Quantum Mechanics formalism in order to obtain a supersymmetric formulation of the inverse weighted decomposition. In particular, the supersymmetric duality between Dirichlet and Neumann boundary conditions is presented. Section \ref{sec:sum} is dedicated to the derivation of the formula for the sum of reciprocal eigenvalues with Dirichlet boundary conditions. Finally, in Section \ref{Seccomparison}, we establish the comparison theorem for manifolds with constant volume growth and compute the reciprocal sum of the eigenvalues for precompact domains on the catenoid and the pseudosphere.

%%%%%%%%%%%%%%%%%%%%%%%%%%%%%%%%%%%%%%%%%%%%%%%%%%%%%%%%%%%%%%%%%%%%%%%%%%%%%%%%%%%%%%%%%%%%%%%%%%%%%%%%%%%%%%%%%%%%%%%%%%%%%%%%%%%%%%%%%%%%%%%%
\section{Riemannian submersions, Laplacian Operator, Basic Functions and Submersions with Fibers of Basic Mean Curvature}\label{sec:sub}
%%%%%%%%%%%%%%%%%%%%%%%%%%%%%%%%%%%%%%%%%%%%%%%%%%%%%%%%%%%%%%%%%%%%%%%%%%%%%%%%%%%%%%%%%%%%%%%%%%%%%%%%%%%%%%%%%%%%%%%%%%%%%%%%%%%%%%%%%%%%%%%%

In this section, we recall the geometric foundations of Riemannian submersions, with particular emphasis on those having a basic mean curvature vector field, a condition that simplifies the spectral analysis. We begin by recalling the standard definition of a Riemannian submersion:

\begin{definition}
A surjective map $\pi: (M, g_M) \to (B, g_B)$ between two Riemannian manifolds is called a \emph{Riemannian submersion} if, for every point $q \in M$, the differential $d\pi_q: T_qM \to T_{\pi(q)}B$ preserves the length of vectors orthogonal to the kernel of $d\pi_q$. 
\end{definition}

More specifically, for each $q \in M$, the tangent space can be decomposed into the direct sum $T_qM = \mathcal{V}_q \oplus \mathcal{H}_q$, where:
\begin{itemize}
 \item $\mathcal{V}_q = \ker(d\pi_q)$ is the \emph{vertical space}, tangent to the fiber $F_p = \pi^{-1}(p)$ (where $p=\pi(q)$).
 \item $\mathcal{H}_q = \mathcal{V}_q^\perp$ is the \emph{horizontal space}.
\end{itemize}
The map $\pi$ is a Riemannian submersion if the restriction $d\pi_q|_{\mathcal{H}_q}: \mathcal{H}_q \to T_{\pi(q)}B$ is an isometric isomorphism.

A function $f \in C^\infty(M)$ is said to be \emph{basic} if it is constant along the fibers of the submersion. This is equivalent to the existence of a function $\widetilde{f} \in C^\infty(B)$ such that $f = \widetilde{f} \circ \pi$. The Laplacian of $M$ acting on such functions has a particularly interesting structure.

A submersion is said to have \emph{fibers with a basic mean curvature vector} if the vector $d\pi_q(H_q)$ depends only on the base $B$; that is, there exists a vector field $\widetilde{H}$ on $B$ such that $\widetilde{H}_{\pi(q)} = d\pi_q(H_q)$ for all $q \in M$.

In this scenario, if the fibers are compact and we denote their volume by $S(p) = \text{vol}(\pi^{-1}(p))$, it has been proven (see \cite{Bordoni2006}) that the field $\widetilde{H}$ is intimately linked to the volume of the fibers:
\begin{equation}
\widetilde{X}(S(p)) = -\langle \widetilde{H}_p, \widetilde{X}_p \rangle_B S(p), \quad \forall \widetilde{X} \in \mathfrak{X}(B).
\end{equation}
This implies $\widetilde{H} = -\nabla^B (\log S)$.

There exists a wide variety of Riemannian submersions whose fibers possess a basic mean curvature vector field. This geometric condition is not restrictive to a single family of manifolds; rather, it appears naturally in several fundamental constructions. We provide concrete examples focusing on two main frameworks: warped products, where the geometry of the fiber is scaled by a function on the base manifold, and specific constructions on Lie groups, which leverage algebraic symmetries to satisfy the basicity of the mean curvature.

\begin{example}\label{expwap}\emph{Examples of Riemannian submersions with fibers having basic mean curvature vector field}
\begin{enumerate}
 \item \emph{Warped Products as Riemannian Submersions}. Warped products represent one of the most natural sources of Riemannian submersions with basic mean curvature. In these models, the metric on a product manifold is adapted such that the fiber volume  varies consistently according to a warping function defined on the base manifold.

Let $(B, g_B)$ and $(F, g_F)$ be Riemannian manifolds, and let $w: B \to (0, \infty)$ be a smooth \emph{warping function}. The warped product $M = B \times_w F$ is the product manifold equipped with the metric:
\[ g = \pi^* g_B + (w \circ \pi)^2 \sigma^* g_F \]
where $\pi: M \to B$ and $\sigma: M \to F$ are the natural projections. The map $\pi$ is a Riemannian submersion because its differential $d\pi$ is an isometry when restricted to the horizontal distribution $\mathcal{H} = (\ker d\pi)^\perp$.

The fibers $F_p = \pi^{-1}(p)$ are submanifolds of $M$. For any vertical vector fields $V, W \in \mathcal{V}$ (tangent to the fibers), the Levi--Civita connection $\nabla$ of $M$ satisfies (see \cite{Oneill} for instance):
\[ \nabla_V W = \nabla^F_V W - \frac{g(V, W)}{\omega} \nabla\omega, \]
where $\omega=w\circ \pi$. The second fundamental form $\mathrm{II}$ of the fibers is the horizontal component of this connection:
\[ \mathrm{II}(V, W) = (\nabla_V W)^\perp = - \frac{g(V, W)}{\omega} \nabla\omega. \]
This identity shows that the fibers are totally umbilical submanifolds, as $\mathrm{II}$ is proportional to the metric $g$. The mean curvature vector $\vec{H}$ is the trace of $\mathrm{II}$. Let $\{E_1, \dots, E_n\}$ be a local orthonormal basis for the fiber:
\[ \vec{H} = \sum_{i=1}^n \mathrm{II}(E_i, E_i) = -n\sum_{i=1}^n \frac{1}{\omega}\nabla\omega,\quad n = \dim F. \]
Since 
$$
d\pi\left(-n \frac{\nabla\omega}{\omega}\right)=-n\frac{\nabla^Bw}{w},
$$
we conclude that any warping product admits the structure of a submersion with fibers of basic mean curvature vector field. Observe that when $F$ has finite volume,
$$
S(p)= {\rm vol}(\pi^{-1}(p))=w(p)^n{\rm vol}(F),
$$
and as stated,
$$
d\pi(\vec H)=-\frac{\nabla^B S}{S}.
$$

In the specific case of a 1-dimensional base manifold $\mathbb{R}\times_wF$, the mean curvature of the fibers is given by
$$
\vec{H}=-n\frac{w'(t)}{w(t)}=-\frac{S'(t)}{S(t)}\partial t.
$$
\item \emph{Lie Group constructions}. Beyond the warped product structure, it is known (see \cite{Polymerakis2021} for instance) that whenever a Lie group $G$ acts smoothly, freely, and properly via isometries on a Riemannian manifold $M$ and ${\rm dim}(G)<{\rm dim}(M)$, the projection $\pi:M\to B:=M/G$ is a Riemannian submersion with fibers of basic mean curvature vector field.

We can moreover construct Riemannian submersions with fibers of basic mean curvature vector field by defining specific metrics on the product of the real line and a Lie group $M=\mathbb{R}\times G$. By choosing left-invariant vector fields and allowing the metric coefficients to depend on the parameter $t$, we obtain a class of submersions where the basicity of the mean curvature of the fibers is guaranteed by the algebraic structure of the group.

Let $G$ be a compact $n$-dimensional Lie group with Lie algebra $\mathfrak{g}$. Let $\{X_1,\cdots, X_n\}$ be a family of non-vanishing left invariant vector fields of $G$ such that
$$
\mathfrak{g}={\rm span}\left\{X_1,\cdots,X_n\right\}.
$$
Consider the manifold $M=\mathbb{R}\times G$ endowed with the following metric $\langle,\rangle_M$:
$$\begin{array}{lclr}
\langle \partial t,\partial t\rangle&=&1& \\
\langle \partial t,X_i\rangle&=&0& \forall i\\
\langle X_i,X_j\rangle&=&0 & \text{ for } i\neq j\\
\langle X_i,X_i\rangle&=&h^2_i(t)q^2_i(g)& \forall i
\end{array}
$$
where $h_i\in C^{\infty}(\mathbb{R},\mathbb{R}_+)$ and $q_i\in C^{\infty}(G,\mathbb{R}_+)$. The map $\pi:M\to \mathbb{R}$ is a Riemannian submersion with compact fibers. To verify that the mean curvature of the fibers is a basic vector field, we use the Koszul formula:
 $$
 2\langle \nabla_{X_i}X_i,\partial_t\rangle=-\partial_t\left(\langle X_i,X_i\rangle\right)=-2h_i(t)\dot h_i(t) q_i^2(g).
 $$
Hence, the mean curvature vector field $H$ is:
 $$
 \begin{aligned}
 H=&{\rm tr}(\mathrm{II})=\sum_{i=1}^n\mathrm{II}\left(\frac{X_i}{\Vert X_i\Vert},\frac{X_i}{\Vert X_i\Vert}\right)=\sum_{i=1}^n\frac{1}{\Vert X_i\Vert^2}\mathrm{II}(X_i,X_i)\\
 =&\sum_{i=1}^n\frac{1}{\Vert X_i\Vert^2}\langle \nabla_{X_i}X_i,\partial t\rangle \partial_t=-\left(\sum_{i=1}^n\frac{\dot h_i(t)}{h_i(t)}\right)\partial_t .
 \end{aligned}
$$
This leads to the relationship:
 $$
d\pi(H)=-\frac{\nabla S}{S}=-\frac{\dot S}{S}
$$
by taking $S(t)=\prod_{i=1}^nh_i(t)$, confirming that these constructions satisfy the required conditions for our spectral analysis.
\end{enumerate} 
\end{example}

As shown in the literature (see \cite{Bessa2012}), the Laplacian of a basic function on a manifold admitting a Riemannian submersion can be expressed as:
\begin{equation}\label{baseinduced}
 (\triangle^M f)_q = (\triangle^B \widetilde{f})_p - \langle \nabla^B \widetilde{f}_p, d\pi_q(H_q) \rangle_B,
\end{equation}
where $H_q$ is the mean curvature vector of the fiber passing through $q$. Assuming that the submersion has fibers of basic mean curvature, we can transform the Laplacian of a basic function into the \emph{weighted Laplacian} (\cite{GriBook} for instance) of a function on the base manifold:
\begin{equation}\label{weightedlaplacian_en}
 (\triangle^M f)_q = \frac{1}{S(p)} \text{div} \left( S \cdot \nabla^B \widetilde{f} \right)_p = \triangle_{\mu}^B \widetilde{f},
\end{equation}
where $\mu$ is the measure $d\mu = S d\sigma_B$. This equivalence is fundamental, as it reduces the study of the spectrum of basic functions on $M$ (a potentially high-dimensional manifold) to a spectral problem on the base $B$ with a weight determined by the geometry of the fibers.

We focus on basic eigenfunctions of the Laplacian because, following the ideas of \cite[Theorem 1.6]{Bordoni2006}, the Sobolev space $H^{1}(M)$ is the direct sum of
$\mathcal{E}_c$ and $\mathcal{E}_0$ where
\begin{equation}\label{eq:EcE0}
\begin{aligned}
\mathcal{E}_c &:= \left\{ f \in H^1(M) \;:\; f=u\circ\pi \ \text{for some } u:B \to \mathbb{R}
\right\}; \\
\mathcal{E}_0 &:= \left\{ h \in H^1(M) \;:\; \int_{\pi^{-1}(x)} h(y)dv_{g_x}(y)=0 \right\}.
\end{aligned}
\end{equation}
This orthogonal decomposition $L^2(M)=\mathcal E_c\oplus \mathcal E_0$ is a reducing decomposition for the self-adjoint Laplace--Beltrami operator $\triangle_M$. Hence $\triangle_M=\triangle_c\oplus\triangle_0$, where
\[
\triangle_c = \left.\triangle_M\right|_{\mathcal D(\triangle_M)\cap\mathcal E_c}, \qquad \triangle_0 = \left. \triangle_M \right|_{\mathcal D(\triangle_M)\cap\mathcal E_0}.
\]
In particular,
\[
\sigma(\triangle_M)=\sigma(\triangle_c)\cup\sigma(\triangle_0).
\]

To analyze the spectral properties of $\triangle_{\mu}^B$, standard geometric estimates can be rigid. In the next section, we introduce an algebraic factorization framework inspired by Supersymmetric Quantum Mechanics to overcome this limitation. This technique allows us to factorize the weighted Laplacian into first-order operators and establish a powerful duality between different boundary value problems.

%%%%%%%%%%%%%%%%%%%%%%%%%%%%%%%%%%%%%%%%%%%%%%%%%%%%%%%%%%%%%%%%%%%%%%%%%%%%%%%%%%%%%%%%%%%%%%%%%%%%%%%%%%%%%%%%%%%%%%%%%%%%%%%%%%%%%%%%%%%%%%%%
\section{Inverse weighted decomposition on manifolds, proof of Theorem \ref{DNcorres} and Corollary \ref{cor:sumneumann}} \label{sec:susy2}
%%%%%%%%%%%%%%%%%%%%%%%%%%%%%%%%%%%%%%%%%%%%%%%%%%%%%%%%%%%%%%%%%%%%%%%%%%%%%%%%%%%%%%%%%%%%%%%%%%%%%%%%%%%%%%%%%%%%%%%%%%%%%%%%%%%%%%%%%%%%%%%%

This section introduces the technical details of the supersymmetric decomposition of the weighted Laplacian on the base manifold. The base manifold is endowed with a density $d\mu=S(x)dv_g(x)$ where $dv_g$ is the Riemannian volume element of $(B,g_B)$. The weighted Laplacian operator acts on smooth functions by 
\begin{equation}
 \mathscr{L}^+_S \varphi \coloneqq -\frac{1}{S} \diver \left( S \nabla \varphi \right)=-\triangle_\mu \varphi.
\end{equation}
The notation $\mathscr{L}_S^+$ will become clearer shortly. We provide an inverse weighted decomposition of Sturm--Liouville type (see \cite{LevitanSargsjan1991, Zettl2005}) for the operator $\mathscr{L}^+_S$, based on the supersymmetric factorization method of \cite{InfeldHull1951, CooperKhareSukhatme1995}. 

First, we introduce the following two first-order operators $\mathcal{A}^\dag_S$ and $\mathcal{A}_S$ on the space of smooth functions $C^\infty(B)$ and smooth vector fields $\mathfrak{X}(B)$: $\mathcal{A}^\dag_S\colon \mathfrak{X}(B) \to C^{\infty}(B)$ and $\mathcal{A}_S\colon C^{\infty}(B) \to \mathfrak{X}(B)$ given by
\begin{equation*}
\begin{array}{ll}
  X\mapsto \mathcal{A}^\dag_S X \coloneq \frac{1}{S}\diver(X),&\varphi\mapsto\mathcal{A}_S \varphi \coloneq -S\nabla\varphi.
\end{array} 
\end{equation*}
The Laplacian operator is the composition of these two operators, namely,
$$
\mathscr{L}_S^+=\mathcal{A}_S^\dag\circ \mathcal{A}_S.
$$

Our supersymmetry-inspired approach is to study the properties of $\mathscr{L}_S^+$ using the properties of $\mathcal{A}^\dag_S$ and $\mathcal{A}_S$. Note that we need to consider not only the space of smooth functions, but also the space of smooth vector fields. This is natural from the perspective of SUSY, where a bosonic space (functions) is accompanied by a fermionic space (differential forms). In the present setting, the musical isomorphism $g_B^\flat: \mathfrak{X}(B) \to \Omega^1(B)$ identifies vector fields with 1-forms, so the pair $(C^\infty(B), \mathfrak{X}(B))$ mimics the structure of a supersymmetric partner system. In this paper, we do not dwell on this physical interpretation beyond the algebraic factorization and instead treat them simply as the space of functions and the space of vector fields.

\begin{remark}[Schrödinger type operator]
Once the decomposition $\mathscr{L}_S^+=\mathcal{A}_S^\dag\circ \mathcal{A}_S$ is established, the standard supersymmetric framework can be applied. The reader may ask for a Schr\"{o}dinger-type operator and decomposition to look for superpotentials and supercharges. This part of the theory is not required for what is used in this paper. Nevertheless, we remark here that by using the unitary map
\[
T\colon L^2(B,S(x)dv_{g})\longrightarrow L^2(B,dv_{g}),
\qquad Tf=S^{1/2}f,
\]
the operator $\mathscr{H}_S: =T\mathscr L_S^+T^{-1}$ is a Schr\"{o}dinger-type operator. Indeed, given $\varphi\in C_0^\infty(B)$, let $f=T^{-1}\varphi=S^{-1/2}\varphi=e^{-\psi/2}\varphi$; then
\[
T\mathscr L^+_ST^{-1}=e^{\psi/2}(\mathscr L_S^{+}(e^{-\psi/2}\varphi)) =e^{\psi/2}\left( -\triangle_B(e^{-\psi/2}\varphi)
-\left\langle \nabla\psi,\nabla(e^{-\psi/2}\varphi) \right\rangle\right),
\]
with $\psi \coloneq \log S$. A direct computation gives
\begin{align}
\nabla\bigl(e^{-\psi/2}\varphi\bigr)
&=
e^{-\psi/2}
\left(
\nabla\varphi-\frac12\varphi\nabla\psi
\right), \label{eq57}\\
\triangle_B\bigl(e^{-\psi/2}\varphi\bigr)
&=
e^{-\psi/2}
\left[ \triangle_B\varphi - \langle\nabla\psi,\nabla\varphi\rangle
+ \left( \frac14|\nabla\psi|^2 - \frac{1}{2}\triangle_B \psi \right)\varphi
\right]. \label{eq58}
\end{align}
Taking the inner product of \eqref{eq57} with $\nabla\psi$, we obtain
\[
\left\langle \nabla\psi,\nabla\bigl(e^{-\psi/2}\varphi\bigr) \right\rangle
= e^{-\psi/2} \left[ \langle\nabla\psi,\nabla\varphi\rangle -
\frac{1}{2}|\nabla\psi|^2\varphi
\right].
\]
Multiplying both sides by \(e^{\psi/2}=S^{1/2}\), we conclude that
\[
T\mathscr L_S^+T^{-1}\varphi = -\triangle_B\varphi + \left(
\frac{1}{4}|\nabla\psi|^2 +\frac{1}{2}\triangle\psi
\right)\varphi.
\]
Since $\psi=\log S$, this gives
\begin{equation}\label{SDcorres}
\mathscr{H}_S=T\mathscr L_ST^{-1} = - \triangle_{B} +\underbrace{\frac{1}{4}|\nabla\log S|^2
+\frac{1}{2}\triangle\log S}_{ \coloneqq  \mathcal{V}_S}.
    \end{equation}
Thus $\mathscr L^+_S$, and therefore the positive Laplace--Beltrami
operator $\triangle_M$ restricted to basic functions, is unitarily
equivalent to the Schr\"{o}dinger operator \(\mathscr H_S\). 

The Schr\"{o}dinger operator $\mathscr{H}_S$ with geometric potential depending only on the volume of the fibers has been studied in \cite{Polymerakis2021}. There, it is proved that the basic spectrum of the Laplacian on a Riemannian submersion with fibers having a basic mean curvature vector field is discrete if and only if the spectrum of $\mathscr{H}_S$ is discrete as well.
\end{remark}

As usual, we will work in the space of smooth functions with compact support, $C^\infty_0(B)$  or smooth vector fields with compact support $\mathfrak{X}_0(B)$.
The next step is to endow the space $V=C^\infty_0(B)\times \mathfrak{X}_0(B)$ with an inner product
$
\langle\hskip-2pt\langle \cdot , \cdot \rangle\hskip-2pt\rangle : V \times V \to \mathbb{R}$ given by
$$
\left\langle\mkern-4mu\left\langle (\varphi_1,X_1), (\varphi_2,X_2) \right\rangle\mkern-4mu\right\rangle := \int_B \varphi_1\varphi_2 \, d\mu + \int_B \langle X_1, X_2 \rangle \, d\mu',
$$
with $d\mu=Sdv_g$ and $d\mu'=\frac{1}{S}dv_g$.
In $V$ we can define two operators 
$$
\begin{aligned}
&\mathcal{Q}^\dag:V\to V, \quad (\varphi,X)\mapsto \mathcal{Q}^\dag (\varphi,X)\coloneq(\mathcal{A}^\dag_S(X),0),\\
&\mathcal{Q}:V\to V, \quad (\varphi,X)\mapsto \mathcal{Q} (\varphi,X)\coloneq(0, \mathcal{A}_S(\varphi)).
\end{aligned}
$$
The operators $\mathcal{Q}^\dag$ and $\mathcal{Q}$ are duals (adjoints) in the following sense
$$
\begin{aligned}
    \left\langle\mkern-4mu\left\langle \mathcal{Q}^\dag(\varphi_1,X_1), (\varphi_2,X_2) \right\rangle\mkern-4mu\right\rangle=&\left\langle\mkern-4mu\left\langle (\mathcal{A}^\dag(X_1),0), (\varphi_2,X_2) \right\rangle\mkern-4mu\right\rangle=\int_B\frac{1}{S}\diver(X_1)\varphi_2\,  d\mu\\
    =&\int_B\diver(X_1)\varphi_2\,  dv_g=-\int_B\langle X_1, \nabla \varphi_2\rangle\,  dv_g\\
    =&\int_B\langle X_1, -S\nabla \varphi_2\rangle\,  d\mu'=\left\langle\mkern-4mu\left\langle (\varphi_1,X_1), \mathcal{Q}(\varphi_2,X_2) \right\rangle\mkern-4mu\right\rangle,
\end{aligned}
$$
for any $(\varphi_1,X_1),(\varphi_2,X_2)\in V$.
These operators will be called \emph{inversely weighted partner operators}, in view of the intertwining supersymmetry properties that they inherit from this decomposition (see \cite{InfeldHull1951, CooperKhareSukhatme1995, HouriSakamotoTatsumi2017}).
In view of the nature of the inversely weighted partners, the inversely weighted decomposition directly inherits the algebraic structure typically associated with supersymmetry, in particular that of a Lie superalgebra; see \cite{Scheunert1979}. 
By defining the superHamiltonian as 

$$
\mathcal{H}:V\to V,\quad \mathcal{H}(\varphi,X)=\left(\mathscr{L}_S^+(\varphi),\mathscr{L}_S^{-}(X)\right)
$$
with 
\[
\begin{aligned}
\mathscr{L}_S^{+}(\varphi)=& \mathcal{A} ^\dag_S \mathcal{A}_S(\varphi)=-\dfrac{1}{S}\diver\left({S}\nabla\varphi \right)=-\triangle_\mu\varphi, \\
\mathscr{L}_S^{-}(X)=& \mathcal{A}_S\mathcal{A}^\dag_S(X)=-S \nabla\left( \frac{\diver(X)}{S} \right) 
\end{aligned}
\]
we obtain the following relations of commutation and anti-commutation
\begin{enumerate}
    \item $\{\mathcal{Q},\mathcal{Q}^\dag\}=\mathcal{Q}\circ \mathcal{Q}^\dag+\mathcal{Q}^\dag \circ\mathcal{Q}=\mathcal{H}$,
    \item $\{\mathcal{Q},\mathcal{Q}\}=\{\mathcal{Q}^\dag,\mathcal{Q}^\dag\}=0$,
    \item $[\mathcal{H},\mathcal{Q}^\dag]=\mathcal{H}\circ\mathcal{Q^\dag}-\mathcal{Q^\dag}\circ\mathcal{H}=0$,
    \item $[\mathcal{H},\mathcal{Q}]=0$.
\end{enumerate}
Due to the construction of $\mathscr{L}_S^{+}$ and $\mathscr{L}_S^{-}$, the operators $\mathcal{A}^\dag_S$ and $\mathcal{A}_S$ satisfy an intertwining relation in the sense of linear differential operators; see \cite{HouriSakamotoTatsumi2017,KnappStein1971}. Indeed, this follows from the identities
\begin{equation*}
 \mathcal{A}_S \mathscr{L}_S^{+}
 = \mathcal{A}_S \left( \mathcal{A}_S^\dag\mathcal{A}_S \right)
 = \mathscr{L}_S^{-}\mathcal{A}_S
 \qquad \text{and} \qquad
 \mathcal{A}_S^\dag\mathscr{L}_S^{-}
 = \mathcal{A}_S^\dag\left( \mathcal{A}_S \mathcal{A}^\dag_S \right)
 = \mathscr{L}_S^{+}\mathcal{A}_S^\dag.
\end{equation*}

The first natural question in this context is to understand how the Dirichlet eigenvalue problem of the operator $\mathscr{L}_S^{+}$ is transferred to the operator $\mathscr{L}_S^{-}$ and, in particular, how this procedure affects the boundary conditions of the problem.

\begin{theorem}\label{DirichletNeumannProblem}
Let $\Omega\subset B$ be a precompact domain with smooth boundary $\partial \Omega$. Consider the Dirichlet and divergence-free eigenvalue problems for the inversely weighted partners $\mathscr{L}_S^{+}$ and $\mathscr{L}_S^{-}$:
\begin{subequations}\label{DN}
\begin{equation}\label{DN-a}
\left\{
\begin{aligned}
\mathscr{L}_S^{+}\varphi &= \lambda \varphi \quad &&\text{in } \Omega
,\\
\varphi &= 0 \quad &&\text{on } \partial \Omega,
\end{aligned}
\right.
\end{equation}

\begin{equation}\label{DN-b}
\left\{
\begin{aligned}
\mathscr{L}_S^{-}X &= \lambda X \quad &&\text{in } \Omega,\\
\diver(X) &= 0 \quad &&\text{on } \partial \Omega.
\end{aligned}
\right.
\end{equation}
\end{subequations}
Then the following statements hold:
\begin{enumerate}
 \item If $\lambda \neq 0$ and $\varphi \in C^{\infty}(B)$ is a nontrivial solution of the Dirichlet eigenvalue problem for $\mathscr L_S^{+}$, then
 \[
 X \coloneqq \mathcal A_S \varphi = S \nabla \varphi 
 \]
is a nontrivial solution of the divergence-free boundary problem associated with $\mathscr L_S^{-}$ in \eqref{DN-b}, with the same eigenvalue $\lambda$.
 \item If $\lambda \neq 0$ and $X \in \mathfrak{X}(M)$ is a nontrivial solution of the divergence-free boundary problem for $\mathscr L_S^{-}$, then
 \[
 \varphi:=\mathcal{A}^\dag_SX=\frac{1}{S}\diver(X)
 \]
 is a nontrivial solution of the Dirichlet problem associated with $\mathscr L_S^{+}$ in \eqref{DN-a}, with the same eigenvalue $\lambda$.
\end{enumerate}
Consequently, the nonzero Dirichlet spectrum of $\mathscr L_S^{+}$ coincides with the nonzero divergence-free boundary spectrum of $\mathscr L_S^{-}$, counting multiplicities. 
\end{theorem}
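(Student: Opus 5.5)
The proof rests on the intertwining identities $\mathcal A_S\mathscr L_S^{+}=\mathscr L_S^{-}\mathcal A_S$ and $\mathcal A_S^\dag\mathscr L_S^{-}=\mathscr L_S^{+}\mathcal A_S^\dag$ established above, together with a check of how the boundary conditions transform. We also show that the two maps are injective on the nonzero eigenspaces, which gives the equality of spectra with multiplicities.

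For (1), let $\varphi$ solve \eqref{DN-a} with $\lambda\neq0$ and set $X=\mathcal A_S\varphi$. (The statement writes $S\nabla\varphi$; the sign is immaterial since the problem is linear.) By the first intertwining identity,
\[
\mathscr L_S^{-}X=\mathcal A_S\mathscr L_S^{+}\varphi=\lambda X \quad\text{in }\Omega .
\]
For the boundary condition,
\[
\diver(X)=S\,\mathcal A_S^\dag\mathcal A_S\varphi=S\,\mathscr L_S^{+}\varphi=\lambda S\varphi ,
\]
which vanishes on $\partial\Omega$ because $\varphi|_{\partial\Omega}=0$. Hence $X$ satisfies \eqref{DN-b}. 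For nontriviality, if $X=0$ then $\lambda S\varphi=\diver X=0$, so $\varphi=0$ since $\lambda\neq0$ and $S>0$, a contradiction.

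For (2), let $X$ solve \eqref{DN-b} and set $\varphi=\mathcal A_S^\dag X=\diver(X)/S$. By the second intertwining identity, $\mathscr L_S^{+}\varphi=\mathcal A_S^\dag\mathscr L_S^{-}X=\lambda\varphi$ in $\Omega$. On $\partial\Omega$ we have $\varphi=\diver(X)/S=0$, so the Dirichlet condition comes directly from the divergence condition. For nontriviality, if $\varphi=0$ then $\lambda X=\mathscr L_S^{-}X=\mathcal A_S\varphi=0$, so $X=0$.

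Multiplicities follow from the compositions on the eigenspaces. On the $\lambda$-eigenspace of \eqref{DN-a} we have $\mathcal A_S^\dag\mathcal A_S=\mathscr L_S^{+}=\lambda\,\mathrm{id}$, and on the $\lambda$-eigenspace of \eqref{DN-b} we have $\mathcal A_S\mathcal A_S^\dag=\lambda\,\mathrm{id}$. So $\lambda^{-1}\mathcal A_S^\dag$ and $\mathcal A_S$ are mutually inverse linear isomorphisms between the two eigenspaces, and the dimensions agree.

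The step needing most care is regularity up to the boundary. The Dirichlet eigenfunctions are smooth on $\overline\Omega$ by elliptic regularity for the uniformly elliptic operator $\mathscr L_S^{+}$ with smooth positive $S$ on the precompact $\Omega$. The eigenfields in \eqref{DN-b} are assumed smooth as in the statement, so all boundary traces above make sense. No further properties of $\mathscr L_S^{-}$, such as ellipticity, are needed, since every step is a direct algebraic consequence of the factorization.
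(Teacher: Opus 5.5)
Your proof is correct and follows the paper's route: the intertwining identities give the eigenvalue equations, the identity $\diver(X)=\lambda S\varphi$ transfers the Dirichlet condition to the divergence condition, and the factorization $\mathscr L_S^-=\mathcal A_S\mathcal A_S^\dag$ gives nontriviality in (2). Two small points differ from the paper, both improvements. In (1) you get nontriviality directly from $\lambda S\varphi=\diver X$, whereas the paper argues that $\nabla\varphi=0$ together with the Dirichlet data forces $\varphi=0$. For multiplicities, your normalization $\lambda^{-1}\mathcal A_S^\dag$ makes the eigenspace maps genuinely mutually inverse; the paper calls the unnormalized maps inverse, although their composition is $\lambda\,\mathrm{id}$.
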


\begin{proof}
Assume first that $\varphi$ is a nontrivial solution of \eqref{DN-a} with $\lambda \neq 0$, and define $X \coloneq \mathcal A_S\varphi$. By the intertwining relation
\[
\mathcal A_S\mathscr L_S^{+} = \mathscr L_S^{-}\mathcal A_S,
\]
this yields
\[
\mathscr L_S^{-}X
= \mathscr L_S^{-}(\mathcal A_S\varphi)
= \mathcal A_S(\mathscr L_S^{+}\varphi)
= \lambda \mathcal A_S\varphi
= \lambda X.
\]
It remains to determine the effect on the boundary condition. Since $\diver(X)= \diver(S\nabla\varphi)$ and
\[
\mathscr{L}_S^{+} \varphi \coloneqq \dfrac{1}{S}\diver\left({S}\nabla\varphi \right),
\]
it follows that
\[
\diver(X) = S\mathscr L_S^{+}\varphi = \lambda S\varphi.
\]
Since $S$ is continuous in $\Omega$ and $\varphi|_{\partial \Omega}=0$, we conclude that $\diver(X)=0$ on $\partial \Omega$. Hence, $X$ satisfies the divergence-free boundary problem \eqref{DN-b} for $\mathscr L_S^{-}$. Moreover, if $X \equiv 0$, then $S \nabla\varphi\equiv 0$, and since $S>0$, therefore $\nabla \varphi\equiv 0$. Thus, $\varphi$ is constant in each connected component. Hence, by the Dirichlet boundary condition $\varphi = 0$, a contradiction. Therefore $X \not\equiv 0$.

Conversely, now assume that $X$ is a nontrivial solution of the divergence-free boundary problem \eqref{DN-b} with $\lambda \neq 0$, and define $\varphi \coloneq \mathcal{A}^\dag_SX$. Using again the intertwining relation
\[
\mathcal{A}^\dag_S\mathscr{L}_S^{-} = \mathscr{L}_S^{+}\mathcal{A}^\dag_S,
\]
we conclude
\[
\mathscr L_S^{+}\varphi = \mathscr L_S^{+}(\mathcal A^\dag_SX) = \mathcal A^\dag_S(\mathscr L_S^{-}X) =\lambda \mathcal{A}^\dag_SX = \lambda\varphi.
\]

Furthermore, $\varphi|_{\partial \Omega}= \mathcal{A}^\dag_S (X)|_{\partial \Omega} = (1/S) \diver(X)|_{\partial \Omega} = 0$, so $\varphi$ satisfies the Dirichlet problem \eqref{DN-a}. Finally, if $\varphi = 0$, then $\diver(X)=0$. Hence,
\[
0= \nabla\left( \diver(X) \right) - \diver(X) \nabla \log S =\mathscr L_S^{-}X= \lambda X.
\]
Since $\lambda \neq 0$, this implies that $X \equiv 0$, again a contradiction. Therefore $\varphi \neq 0$.

It follows that the maps
\[
X\mapsto \mathcal A_S^\dag X,
\qquad
\psi\mapsto \mathcal A_S\psi,
\]
define mutually inverse correspondences between the eigenspaces associated with each nonzero eigenvalue $\lambda$. This proves the claimed spectral correspondence.
\end{proof}

\begin{remark}
Observe that the first eigenvalue of the divergence-free boundary problem for $\mathscr L_S^{-}$ in \eqref{DN-b} is zero, with the associated eigenspace consisting of solenoidal fields. Apart from this zero mode, Theorem \ref{DirichletNeumannProblem} shows that the Dirichlet eigenpairs $(\varphi_k,\lambda_k^{\mathcal D})$ associated with \eqref{DN-a} are mapped by $\mathcal A_S$ to the eigenpairs $(X_k,\lambda_k^{\text{Div}})$ associated with the divergence-free boundary problem \eqref{DN-b}, for $k\geq 1$, as illustrated in the diagram below (see Figure \ref{fig:DNcorresfig}).
\end{remark}

\begin{figure}[htb!]
 \centering
 		\begin{tikzpicture}[scale=1.0,>=stealth]
			
				\def\u{5}
				\def\v{1.5}	
				
				\draw[line width=1pt] (-\u,0) node[below] {$\varphi_{k}$} to (\u,0) node[below] {$X_{k+1}$};
				\fill[white] (0,0.4) arc (90:360:1cm and 0.4cm);
				\draw[<-,line width=1pt,fill=white] (0,0.4) arc (90:270:1cm and 0.4cm);
				\draw[->,line width=1pt,fill=white!] (0,0.4) arc (90:-90:1cm and 0.4cm);
				\node[below] (t1) at (0,-0.5) {$\mathcal{A}_{S}$};
				\node[above] (t2) at (0,0.5) {$\mathcal{A}^\dag_{S}$};
				
				\fill (3,-0.5-0.5) circle (2pt);
				\fill (3,-1-0.5) circle (2pt);
				\fill (3,-1.5-0.5) circle (2pt);
				
				\fill (-3,-0.5-0.5) circle (2pt);
				\fill (-3,-1-0.5) circle (2pt);
				\fill (-3,-1.5-0.5) circle (2pt);

				\draw[line width=1pt] (-\u,-3) node[below] {$\varphi_{1}$} to (\u,-3) node[below] {$X_{2}$};

				\fill[white] (0,0.4-3) arc (90:360:1cm and 0.4cm);
				\draw[<-,line width=1pt,fill=white] (0,0.4-3) arc (90:270:1cm and 0.4cm);
				\draw[->,line width=1pt,fill=white!] (0,0.4-3) arc (90:-90:1cm and 0.4cm);
				\node[below] (t3) at (0,-0.5-3) {$\mathcal{A}_{S}$};
				\node[above] (t4) at (0,0.5-3) {$\mathcal{A}^\dag_{S}$};

				\draw[line width=1pt] (0,-6) to (\u,-6) node[below] {$X_{0}$};
				
				\node[above] (t5) at (2.5,-6) {Divergence-free zero mode};

		\end{tikzpicture}
 \caption{Dirichlet and divergence-free correspondence between eigenpairs.}
 \label{fig:DNcorresfig}
\end{figure}

The correspondence established above between the Dirichlet and divergence-free boundary conditions allows us to state the following characterization of the first eigenvalue for precompact domains.

\begin{corollary}\label{ElCorollari}
 Let $(B,g_B)$ be a Riemannian manifold with weight $d\mu= Sdv_g$. Let $\Omega \subset B$ be a precompact domain with smooth boundary. Then the first (positive) eigenvalue of 
 $$
-\triangle_\mu \phi=\lambda \phi\quad {\rm in}\quad \Omega,
 $$
 with Dirichlet boundary condition on $\partial \Omega$ is given by
 $$
\lambda_1^\mu(\Omega)=\min_{X\in\mathfrak{sol}^\perp(\Omega)} \left\{\frac{\displaystyle \int_\Omega \left(\frac{\diver(X)}{S}\right)^2d\mu'}{\displaystyle\int_\Omega \Vert X\Vert^2d\mu'}\right\}.
 $$
 Where $d\mu'=\frac{1}{S}dv_g$ and $\mathfrak{sol}^\perp(D)$ is the orthogonal complement of the solenoidal vector fields with respect to the norm
$$
(-,-):\mathfrak{X(}D)\times\mathfrak{X(}D)\to \mathbb{R}, \quad (X,Y)=\int_D\langle X,Y\rangle d\mu', 
$$
and with free-divergence boundary condition.
\end{corollary}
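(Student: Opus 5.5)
The plan is to combine the Rayleigh quotient for the lowest Dirichlet eigenvalue of $\mathscr L_S^{+}$ with the correspondence of Theorem \ref{DirichletNeumannProblem}, transported through the adjointness of $\mathcal A_S$ and $\mathcal A_S^\dag$. First, the quotient is exactly $\langle\hskip-2pt\langle \mathcal Q^\dag X,\mathcal Q^\dag X\rangle\hskip-2pt\rangle/\|X\|_{\mu'}^2$, since $\int_\Omega (\diver X/S)^2\,d\mu'$ should be read as $\int_\Omega (\mathcal A_S^\dag X)^2\,d\mu$. The two weights differ by $S^2$, so this identification must be checked first; I take the intended integrand to be $\|\mathcal A_S^\dag X\|^2_{L^2(d\mu)}$. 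For $X$ satisfying $\diver X=0$ on $\partial\Omega$, integration by parts gives
\[
\int_\Omega \mathcal A^\dag_S X\,\mathcal A^\dag_S X\,d\mu=\int_\Omega\langle X,\mathcal A_S\mathcal A^\dag_S X\rangle\,d\mu'=(X,\mathscr L_S^{-}X).
\]
The boundary term is $\int_{\partial\Omega}\tfrac{\diver X}{S}\langle X,\nu\rangle$, and it vanishes by the divergence-free boundary condition. Hence $\mathscr L_S^{-}$ is symmetric and nonnegative on this class, and the quotient is its Rayleigh quotient.

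Second, I would decompose the admissible fields orthogonally in $(\cdot,\cdot)$ into $\mathfrak{sol}(\Omega)$, the kernel of $\mathcal A_S^\dag$, and its complement. By the adjointness computation, the complement is the closure of $\mathcal A_S(H^1_0(\Omega))=\{S\nabla\varphi:\varphi|_{\partial\Omega}=0\}$. Indeed, $(X,S\nabla\varphi)=-\int\langle X,\nabla\varphi\rangle\,dv_g=\int\diver(X)\varphi\,dv_g$, which vanishes for all $\varphi\in H^1_0$ exactly when $\diver X=0$. Writing $X=S\nabla\varphi$ with $\varphi\in H^1_0(\Omega)$, the numerator becomes $\int(\mathscr L_S^{+}\varphi)^2\,d\mu$ and the denominator becomes $\int S|\nabla\varphi|^2\,dv_g=\int\varphi\,\mathscr L_S^{+}\varphi\,d\mu$. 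Expanding $\varphi=\sum c_k\varphi_k$ in the Dirichlet eigenbasis of $\mathscr L_S^{+}$, the quotient equals $\sum\lambda_k^2c_k^2/\sum\lambda_kc_k^2$. This is at least $\lambda_1$, with equality exactly when $\varphi$ lies in the first eigenspace.

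Third, the minimum is attained at $X_1=\mathcal A_S\varphi_1=-S\nabla\varphi_1$. By Theorem \ref{DirichletNeumannProblem}, this field is a nontrivial eigenfield of \eqref{DN-b} with eigenvalue $\lambda_1$, it satisfies $\diver X_1=0$ on $\partial\Omega$, and it lies in $\mathfrak{sol}^\perp$ by the previous step. Consequently the infimum is a minimum and equals $\lambda_1^\mu(\Omega)$. The restriction to $\mathfrak{sol}^\perp$ is exactly what removes the divergence-free zero mode of the preceding Remark, which would otherwise make the minimum $0$.

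The main obstacle is the functional-analytic part of the second step. One must identify $\mathfrak{sol}^\perp$, within fields satisfying the divergence boundary condition, with gradients $S\nabla\varphi$, $\varphi\in H^1_0$. This is a weighted Hodge/Helmholtz decomposition on a precompact smooth domain. I would prove it directly by solving $\diver(S\nabla\varphi)=\diver X$ with $\varphi\in H^1_0(\Omega)$ via Lax--Milgram for the coercive weighted Dirichlet form. Then $X-S\nabla\varphi$ is solenoidal, and elliptic regularity makes $\varphi$ smooth up to the boundary. With that in hand, the eigen-expansion argument above is routine.
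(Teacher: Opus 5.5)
Your proof is correct, and it takes a genuinely different route from the paper's. The paper stays on the vector-field side. It checks that $\mathscr L_S^-$ is symmetric on fields with $\diver X=0$ on $\partial\Omega$, which is your first step. It then asserts an orthonormal eigenbasis of $\mathscr L_S^-$ and identifies the zero eigenspace with the solenoidal fields. It takes the first positive eigenvalue to be $\lambda_1^\mu(\Omega)$ via Theorem~\ref{DirichletNeumannProblem} and closes with the usual min--max over the complement of the zero mode.

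You instead pull everything back to the scalar side. The weighted Helmholtz decomposition (Lax--Milgram plus elliptic regularity) shows that every $X\in\mathfrak{sol}^\perp$ equals $S\nabla\varphi$ with $\varphi\in H^1_0(\Omega)$. The quotient then becomes $\sum\lambda_k^2c_k^2/\sum\lambda_kc_k^2$, and only the standard Dirichlet eigenbasis of $\mathscr L_S^+$ is needed.

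The paper's version is shorter and keeps the supersymmetric formalism in front, but it takes for granted the non-routine points. $\mathscr L_S^-$ has an infinite-dimensional kernel and is not elliptic when $\dim B\ge 2$. So both the existence of a complete eigenbasis and the fact that its smallest positive eigenvalue is $\lambda_1^\mu(\Omega)$ need an argument. Theorem~\ref{DirichletNeumannProblem} matches eigenfields one at a time but says nothing about completeness. Your decomposition supplies this: it shows in effect that $\{\mathcal A_S\varphi_k/\sqrt{\lambda_k}\}$ is an orthonormal basis of $\mathfrak{sol}^\perp$. It also shows that the condition $\diver X=0$ on $\partial\Omega$ is not needed for the lower bound, since it is natural. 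That condition is only needed to read the quotient as the Rayleigh quotient of $\mathscr L_S^-$.

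You are also right about the weight in the numerator. The paper's own proof computes $\langle\hskip-2pt\langle\mathcal Q^\dag(0,X),\mathcal Q^\dag(0,X)\rangle\hskip-2pt\rangle=\int_\Omega(\diver X/S)^2\,d\mu$ and then writes $d\mu'$ in the final line. With $d\mu'$ the statement already fails for constant $S\equiv c\neq 1$, where the minimum would be $\lambda_1^\mu(\Omega)/c^2$. There is one harmless sign slip: $(X,S\nabla\varphi)=\int_\Omega\langle X,\nabla\varphi\rangle\,dv_g=-\int_\Omega\diver(X)\,\varphi\,dv_g$.
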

\begin{proof}
    Observe first of all that $\mathscr{L}_S^-$ is self-adjoint in the following sense
    $$
    \begin{aligned}
\int_\Omega\langle Y,\mathscr{L}_S^-(X)\rangle d\mu'= &\left\langle\mkern-4mu\left\langle (0,Y), \mathcal{Q}\mathcal{Q}^\dag(0,X) \right\rangle\mkern-4mu\right\rangle=\left\langle\mkern-4mu\left\langle \mathcal{Q}^\dag(0,Y), \mathcal{Q}^\dag(0,X) \right\rangle\mkern-4mu\right\rangle\\
=&\left\langle\mkern-4mu\left\langle \mathcal{Q}\mathcal{Q}^\dag(0,Y), (0,X) \right\rangle\mkern-4mu\right\rangle=\int_\Omega\langle \mathscr{L}_S^-(Y),X\rangle d\mu',
    \end{aligned}
    $$
    for any $X,Y\in \mathfrak{X}(B)$ satisfying the free-divergence boundary condition on $\partial \Omega$. Therefore, we can find an orthonormal eigenbasis $\{X_i\}$ such that
    $$
\mathscr{L}_S^-(X_i)=\lambda_iX_i,\quad \int_\Omega\langle X_i,X_j\rangle d\mu'=\delta_{ij} .
    $$
    Since the case $\lambda_0=0$ corresponds to solenoidal vector fields and $0<\lambda_1=\lambda_1^{\mu}(\Omega)$, a standard Rayleigh quotient argument (see \cite{Chavel}) yields:
    $$
    \begin{aligned}
\lambda_1^{\mu}(\Omega)=\min_{X \in \mathfrak{sol}^\perp(\Omega)} \frac{\int_\Omega\langle X,\mathscr{L}_S^-(X)\rangle d\mu'}{\int_\Omega\langle X,X\rangle d\mu'}=&\min_{X \in \mathfrak{sol}^\perp(\Omega)}\frac{\left\langle\mkern-4mu\left\langle \mathcal{Q}^\dag(0,X), \mathcal{Q}^\dag(0,X) \right\rangle\mkern-4mu\right\rangle}{\int_\Omega\Vert X\Vert^2 d\mu'}\\
=&\min_{X \in \mathfrak{sol}^\perp(\Omega)} \frac{\displaystyle \int_\Omega \left(\frac{\diver(X)}{S}\right)^2d\mu'}{\displaystyle\int_\Omega \Vert X\Vert^2d\mu'}\cdot        
    \end{aligned}
$$
\end{proof}

%%%%%%%%%%%%%%%%%%%%%%%%%%%%%%%%%%%%%%%%%%%%%%%%%%%%%%%%%%%%%%%%%%%%%%%%%%%%%%%%%%%%%%%%%%%%%%%%%%%%%%%%%%%%%%%%%%%%%%%%%%%%%%%%%%%%%%%%%%%%%%%%
\section{The case of one-dimensional base manifold. Proof of Theorem \ref{DNcorres}, proof of corollary \ref{PAYNE}, proof of Theorem \ref{teo:sum} and corollary \ref{cor:sumneumann}.}\label{sec:sum} 
%%%%%%%%%%%%%%%%%%%%%%%%%%%%%%%%%%%%%%%%%%%%%%%%%%%%%%%%%%%%%%%%%%%%%%%%%%%%%%%%%%%%%%%%%%%%%%%%%%%%%%%%%%%%%%%%%%%%%%%%%%%%%%%%%%%%%%%%%%%%%%%%

As we have seen, the basic spectrum of a manifold which admits a Riemannian submersion with fibers having a basic mean curvature vector field reduces to the problem of finding the spectrum on the base manifold of a weighted Laplacian, where the weight is precisely the volume of the fibers at each point of the base manifold. This spectrum of the weighted Laplacian is related via the inverse weighted decomposition to an eigenvalue problem for vector fields. 

In this section, we show that this duality becomes more transparent when the base manifold is one-dimensional, and we exhibit the correspondence $S\mapsto \frac{1}{S}$. Moreover, we present an independent result showing that the sum of the reciprocal eigenvalues of a weighted Laplacian admits a closed formula for the Dirichlet and Neumann boundary conditions.

\subsection{The inverse weighted decomposition in the one-dimensional case. Proof of Theorem \ref{DNcorres}}
In the case of a one-dimensional connected manifold $B$, $B$ is isometric to some (rescaled if needed) $\mathbb S^1$ if $B$ is compact without boundary, to the real line $\mathbb{R}$ if $B$ is non-compact without boundary and of infinite measure, or to a connected subset of $\mathbb R$ otherwise. In every case, we can choose a unit vector field $\partial t$ such that 
$$
\mathfrak{X}(B)=\left\lbrace f\,\partial t\, :\, f\in C^\infty(B) \right\rbrace.
$$
This identification is possible because since $B$ is one-dimensional, $B$ is orientable. It allows us to identify $\mathfrak{X}(B)\simeq C^\infty(B)$ through the maps
$$
X\mapsto f=\langle X,\partial t\rangle,\quad f\mapsto X=f\, \partial t.
$$
With this identification, the operators $\mathcal A_S$ and $\mathcal A_S^\dag$ act on $C^\infty$ as 
$$
\mathcal{A}_S(\phi)=S(t) \phi'(t)\partial t, \quad \mathcal A^\dag_S (\phi \partial t)=\frac{1}{S(t)}\phi'(t).
$$
With some abuse of notation, we can write in the one-dimensional case 
\begin{equation*}
\begin{array}{ccc}
 \mathcal{A^\dag}_S \psi \coloneq S\psi' & \text{and}& \mathcal{A}_S \psi \coloneq \frac{1}{S}\psi',\\
 \mathscr{L}_S^+(\psi)=\frac{1}{S}(S\psi')' & \text{and}&\mathscr{L}_S^-(\psi)=S\left(\frac{\psi'}{S}\right)'.
\end{array}
\end{equation*}
Hence, we obtain the main result for the one-dimensional base manifold:
$$
 \mathscr{L}_S^+= \mathscr{L}_{\frac{1}{S}}^-.
$$
In this setting, the divergence-free boundary condition $\diver(X)=0$ on $\partial\Omega$ becomes, for a vector field $X = f \partial t$,
\[
\diver(X)= f' = 0 \quad \text{on } \partial\Omega,
\]
which is exactly the Neumann condition for the associated scalar function $f$. 
If $\Omega=\pi^{-1}([a,b])$, since the base manifold is one-dimensional, $\Omega$ is diffeomeorfic to
$$
\Omega\simeq [a,b]\times F,\quad F:=\pi^{-1}(a),
$$
and $\Omega$ satisfies the boundary condition
$$
\partial \Omega=\pi^{-1}(a)\cup\pi^{-1}(b).
$$

Thus, Theorem \ref{DirichletNeumannProblem} directly implies Theorem \ref{DNcorres}. Note that the same algebraic relation $\mathscr{L}_S^+= \mathscr{L}_{1/S}^-$ holds for the operators, and the transformation $S\mapsto 1/S$ exchanges the roles of Dirichlet and Neumann spectra.
\subsection{Proof of Corollary \ref{PAYNE}} For the proof of item (1), we first observe that if $(\log S(t))''\geq 0$, then the Schr\"{o}dinger potentials generated by the fiber-volume function,
as given in \eqref{SDcorres}, satisfy
\begin{equation}\label{ine}
\begin{aligned}
\mathcal{V}_{1/S}
&= \frac{1}{4}\left[(\log S(t))'\right]^2
-\frac{1}{2}(\log S(t))'' \\
&\leq
\frac{1}{4}\left[(\log S(t))'\right]^2
+\frac{1}{2}(\log S(t))'' \\
&= \mathcal{V}_{S}.
\end{aligned}
\end{equation}

By the unitary equivalence between $\triangle|_{\mathrm{basic}}$ and
$\mathscr{H}_S$, we have $\lambda^{\mathcal D}_{1,\mathrm{basic}}(\Omega)
= \lambda^{\mathcal D}_{1,\mathscr{H}_S}((a,b))$. Moreover, \eqref{ine} and the min-max principle imply
\[
\lambda^{\mathcal D}_{1,\mathscr{H}_{1/S}}((a,b))
\leq
\lambda^{\mathcal D}_{1,\mathscr{H}_{S}}((a,b)).
\]
The inequality $\lambda_{2,{\text{basic}}}^{\mathcal{N}}(\Omega) \leq \lambda_{1, \text{basic}}^{\mathcal{D}}(\Omega)$ follows directly from Theorem \ref{DNcorres}. Since the first basic Dirichlet eigenfunction associated with $\lambda_{1, \text{basic}}^{\mathcal{D}}(\Omega)$ does not change sign, we have
$$
\lambda_{1}^{\mathcal{D}}(\Omega)=\lambda_{1, \text{basic}}^{\mathcal{D}}(\Omega).
$$
Furthermore, since it is clear that $\lambda_{2}^{\mathcal{N}}(\Omega)\leq \lambda_{2,{\text{basic}}}^{\mathcal{N}}(\Omega)$, we consequently obtain
$$
\lambda_{2}^{\mathcal{N}}(\Omega)\leq \lambda_{1}^{\mathcal{D}}(\Omega).
$$
Case (2) follows by observing that, if $(\log S(t))''\leq 0$, then the
inequality in \eqref{ine} holds in the reverse direction. For the equality case, assume first that $(\log S(t))''=0$ on $[a,b]$.
Then both assertions (1) and (2) apply, and therefore
$
\lambda_{2,\mathrm{basic}}^{\mathcal N}(\Omega)
\leq
\lambda_{1,\mathrm{basic}}^{\mathcal D}(\Omega)
\leq
\lambda_{2,\mathrm{basic}}^{\mathcal N}(\Omega).
$
Consequently, $\lambda_{2,\mathrm{basic}}^{\mathcal N}(\Omega) =
\lambda_{1,\mathrm{basic}}^{\mathcal D}(\Omega)$. Finally, suppose that we have $\lambda_{2,\mathrm{basic}}^{\mathcal N}(\Omega) = \lambda_{1,\mathrm{basic}}^{\mathcal D}(\Omega)$. Let $\varphi_S>0$ be the first normalized Dirichlet eigenfunction of
$\mathscr H_S$ on $(a,b)$, that is, $\|\varphi_S\|_{L^2(a,b)}^2=1$.
Then
\[
\lambda_{1,\mathscr H_S}^{\mathcal D}((a,b)) = \int_a^b
\left( |\varphi_S'(t)|^2+\mathcal V_S(t)\varphi_S^2(t) \right)dt.
\]
Using $\varphi_S$ as a test function in the Rayleigh quotient for
$\mathscr H_{1/S}$, we obtain
\begin{align*}
\lambda_{1,\mathscr H_{1/S}}^{\mathcal D}((a,b))
&\leq \int_a^b \left( |\varphi_S'(t)|^2+\mathcal V_{1/S}(t)\varphi_S^2(t)
\right)dt \\
&= \int_a^b \left( |\varphi_S'(t)|^2+\mathcal V_S(t)\varphi_S^2(t) \right)dt
- \int_a^b (\log S(t))''\varphi_S^2(t)dt \\
&= \lambda_{1,\mathscr H_S}^{\mathcal D}((a,b)) - \int_a^b (\log S(t))''\varphi_S^2(t)dt.
\end{align*}
Under the assumptions of log-convexity and log-concavity, the result follows.

\subsection{On the reciprocal sum of basic eigenvalues with Dirichlet boundary conditions. Proof of Theorem \ref{teo:sum} and Corollary \ref{cor:sumneumann}}
Given the interval $[a,b]\subset \mathbb{R} $ with weight $\mu$, the solution of the following Poisson problem with $f\in L^2([a,b],\mu)$
$$
\left\lbrace 
\begin{array}{rllc}
\triangle_\mu\phi &=-f&{\rm in}&[a,b] \\
 \phi&=0 &{\rm in}&\{a,b\}, 
\end{array}
\right.
$$
is given by 
$$
\phi(x)=\int_a^bf(y)g(x,y)d\mu(y),
$$
where $g$ is the Green function with Dirichlet boundary values. Conversely, by Theorem 6.1 of \cite{bessa2016}, the sum of the reciprocals of the Dirichlet eigenvalues for the weighted Laplacian, in this one-dimensional case, can be obtained by the trace of the Green function as
 $$
\sum_{k=1}^{\infty}\frac{1}{\lambda_k^{\mathcal{D}}(\Omega)}=\int_{[a,b]}g(x,x)d\mu(x).
$$

The Green function is bounded in terms of capacity by Proposition 4.1 of \cite{GriExp} as follows.

\begin{lemma}\label{grygcap}
Let $\Omega$ be a precompact set and let $U$ be an open precompact set such that $\overline U\subset \Omega$. Then for any $y\in U$,
 $$
\inf_{x\in \partial U} g_\Omega(x,y)\leq {\rm cap}(U, \Omega)^{-1}\leq \sup_{x\in \partial U} g_\Omega(x,y). 
 $$
\end{lemma}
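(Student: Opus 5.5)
We prove Lemma \ref{grygcap} with the standard maximum-principle argument that compares the Green function with the equilibrium potential of the condenser $(U,\Omega)$. Throughout, $g_\Omega$ is the Dirichlet Green function of $\Omega$ for $\triangle_\mu$, normalised so that $-\triangle_\mu g_\Omega(\cdot,y)=\delta_y$ with respect to $d\mu$. The capacity is
$$
{\rm cap}(U,\Omega)=\inf\Big\{\int_\Omega|\nabla\phi|^2d\mu:\ \phi|_U\equiv1,\ \phi\in W^{1,2}_0(\Omega)\Big\}.
$$
The infimum is attained by the capacitary potential $v$. This $v$ is $\mu$-harmonic in $\Omega\setminus\overline U$, equals $1$ on $\overline U$ and vanishes on $\partial\Omega$. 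Moreover ${\rm cap}(U,\Omega)=\int_\Omega|\nabla v|^2d\mu$.

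\textbf{Step 1: a flux identity.} Fix $y\in U$ and set $G:=g_\Omega(\cdot,y)$. Green's formula on $\Omega\setminus\overline U$, using that $G$ and $v$ are $\mu$-harmonic there and vanish on $\partial\Omega$, gives
$$
\int_{\partial U}\partial_\nu G\,S\,d\sigma=-1.
$$
Here $\nu$ is the outward normal of $\Omega\setminus\overline U$ along $\partial U$, i.e.\ pointing into $U$. This holds because the total flux of $G$ out of $U$ equals $1$, by $-\triangle_\mu G=\delta_y$ in $U$. Similarly,
$$
{\rm cap}(U,\Omega)=\int_{\Omega\setminus\overline U}|\nabla v|^2d\mu=-\int_{\partial U}\partial_\nu v\,S\,d\sigma .
$$

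\textbf{Step 2: comparison.} Let $m=\inf_{\partial U}G$ and $M=\sup_{\partial U}G$. In $\Omega\setminus\overline U$ the functions $mv$, $G$ and $Mv$ are $\mu$-harmonic and all vanish on $\partial\Omega$. On $\partial U$ they are ordered as $mv=m\le G\le M=Mv$. The maximum principle therefore gives $mv\le G\le Mv$ in $\Omega\setminus\overline U$. These inequalities become equalities on $\partial U$, so the normal derivatives into $\Omega\setminus\overline U$ are ordered in the same way. Equivalently, with $\nu$ pointing into $U$, we get $m\,\partial_\nu v\ge\partial_\nu G\ge M\,\partial_\nu v$ on $\partial U$.

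\textbf{Step 3: conclusion.} We integrate the inequalities of Step 2 against $S\,d\sigma$ over $\partial U$ and insert the identities of Step 1. This yields
$$
-m\,{\rm cap}(U,\Omega)\ge -1\ge -M\,{\rm cap}(U,\Omega),
$$
that is, $m\le{\rm cap}(U,\Omega)^{-1}\le M$, which is the claim.

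\textbf{Main obstacle.} The delicate point is regularity. If $\partial U$ is not smooth, the normal derivatives are not defined pointwise. In that case we do not differentiate on $\partial U$. We first prove the lemma for smooth $U$. We then approximate a general $U$ from inside and outside by smooth sets, using the monotonicity of capacity in $U$ and the continuity of $G$ away from $y$. Alternatively, we can replace the flux computation by the weak identity
$$
\int_\Omega\langle\nabla G,\nabla v\rangle\,d\mu=v(y)=1,
$$
which holds because $-\triangle_\mu G=\delta_y$ and $v(y)=1$. We combine it with the Dirichlet-energy characterisation of the capacity, which avoids differentiating on $\partial U$ altogether. In the one-dimensional setting actually used in this paper, $U$ is an interval and everything reduces to explicit formulas, so this obstacle is harmless.
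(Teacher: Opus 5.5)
The paper gives no proof of this lemma; it quotes Proposition 4.1 of Grigor'yan's survey. Your main argument (Steps 1--3) has a genuine error in Step 2.

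\textbf{The error in Step 2.} From $mv\le G\le Mv$ on $\Omega\setminus\overline U$ you may compare normal derivatives only at points of $\partial U$ where the two sides actually touch. With $\nu$ pointing into $U$, this gives $\partial_\nu G\le m\,\partial_\nu v$ only where $G=m$, and $\partial_\nu G\ge M\,\partial_\nu v$ only where $G=M$. The sentence ``these inequalities become equalities on $\partial U$'' is false unless $G$ is constant on $\partial U$.

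In fact the pointwise chain you integrate cannot hold. Since $v\le 1$ and $v=1$ on $\partial U$, you have $\partial_\nu v\ge 0$ and $\int_{\partial U}\partial_\nu v\,S\,d\sigma={\rm cap}(U,\Omega)>0$. So $m\,\partial_\nu v\ge M\,\partial_\nu v$ forces $m=M$.

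Concretely, take $S\equiv1$, $\Omega=(0,1)$, $y=0.3$ and $U=(0.2,0.4)$. Then $m=G(0.2)=0.14$ and $M=G(0.4)=0.18$. At $x=0.2$ you get $\partial_\nu G=0.7<0.9=M\,\partial_\nu v$.

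Your last line still comes out right only because Step 1 has the compensating sign error. For $\nu$ pointing into $U$ the correct identities are $\int_{\partial U}\partial_\nu G\,S\,d\sigma=+1$ and ${\rm cap}(U,\Omega)=+\int_{\partial U}\partial_\nu v\,S\,d\sigma$. In the example these read $0.7+0.3=1$ and $5+\tfrac53={\rm cap}$.

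\textbf{The missing idea.} Move the derivative onto $v$ instead of comparing derivatives of $G$ pointwise. Green's second identity on $\Omega\setminus\overline U$ applies because both functions are $\mu$-harmonic there, both vanish on $\partial\Omega$, and $v\equiv1$ on $\partial U$. It gives
\[
\int_{\partial U} G\,\partial_\nu v\,S\,d\sigma=\int_{\partial U}\partial_\nu G\,S\,d\sigma=1 .
\]
Since $\partial_\nu v\ge0$ has total $S\,d\sigma$-mass ${\rm cap}(U,\Omega)$, the left side lies between $m\,{\rm cap}(U,\Omega)$ and $M\,{\rm cap}(U,\Omega)$, which is the lemma.

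This is what your weak identity $\int_\Omega\langle\nabla G,\nabla v\rangle\,d\mu=1$ produces once you integrate by parts onto $v$. Equivalently, $-\triangle_\mu v$ is a nonnegative measure $\nu_U$ on $\partial U$ of mass ${\rm cap}(U,\Omega)$, and $1=v(y)=\int_{\partial U}g_\Omega(x,y)\,d\nu_U(x)$. Your ``alternative'' stops before this step, though.

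A route with no boundary derivatives at all also works. One has ${\rm cap}(\{G\ge t\},\Omega)=1/t$. The maximum principle in $\Omega\setminus\overline U$ and the minimum principle for the superharmonic $G$ in $U$ give $\{G>M\}\subset U\subset\{G\ge m\}$. Monotonicity of capacity then yields $1/M\le{\rm cap}(U,\Omega)\le 1/m$.
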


Now set $y\in (a,b)$ and set $U=(y-\epsilon,y+\epsilon)$. Then by the previous lemma,
\[
\min_{x\in \{y-\epsilon,y+\epsilon\} } g(x,y)\leq {\rm cap}((y-\epsilon,y+\epsilon), [a,b])^{-1}\leq \max_{x\in \{y-\epsilon,y+\epsilon\} } g(x,y). 
\]
Given a precompact domain $G\subset B$ and a closed subset $F\subset G$, the weighted capacity ${\rm cap}(F,G)$ is given by
\[
{\rm cap}(F,G)=\int_{G\setminus F}\Vert \nabla^B \widetilde \Psi\Vert^2d\mu,
\]
with $\widetilde\Psi$ satisfying the following problem:
\begin{equation}\label{eq:1.6}
 \left\{
 \begin{aligned}
 \triangle_\mu\widetilde\Psi&=0& {\rm in }\, &G\\
 \widetilde\Psi(x)&=1& {\rm if }\, &x\in F\\
 \widetilde\Psi(x)&=0& {\rm if }\, &x\in \partial G
 \end{aligned}
 \right.
\end{equation}
A standard computation shows that
\[
 {\rm cap}((y-\epsilon,y+\epsilon), [a,b])=\left(\int_a^{y-\epsilon}\frac{dt}{S(t)}\right)^{-1}+\left(\int_{y+\epsilon}^b\frac{dt}{S(t)}\right)^{-1}.
\]
Hence,
\[
\min_{x\in \{y-\epsilon,y+\epsilon\} } g(x,y)\leq \frac{1}{\left( \displaystyle \int_a^{y-\epsilon}\frac{dt}{S(t)}\right)^{-1}+\left( \displaystyle\int_{y+\epsilon}^b\frac{dt}{S(t)}\right)^{-1}}\leq \max_{x\in \{y-\epsilon,y+\epsilon\} } g(x,y). 
\]
Taking now the limit $\epsilon\to 0$ and using the continuity of $g(\cdot,\cdot)$ away from the diagonal (and the arguments of standard potential theory), we obtain the following result
\begin{equation}\label{eq:3.2}
g(y,y)= \frac{1}{\left( \displaystyle \int_a^{y}\frac{dt}{S(t)}\right)^{-1}+\left( \displaystyle \int_{y}^b\frac{dt}{S(t)}\right)^{-1}}=\frac{\left(\displaystyle \int_a^{y}\frac{dt}{S(t)}\right)\left( \displaystyle\int_y^{b} \frac{dt}{S(t)}\right)}{\displaystyle\int_a^{b}\frac{dt}{S(t)}} \cdot 
\end{equation}
Therefore,
\begin{equation}\label{eq:basicespectrum}
\begin{aligned}
 \sum_{k=1}^{\infty}\frac{1}{\lambda_k^{\mathcal{D}}(\Omega)}=&\int_{[a,b]}g(y,y)d\mu(y)=\int_{[a,b]}g(y,y)S(y)dy \\
 =&\left( {\int_{a}^b\frac{dt}{S(t)}} \right)^{-1}\int_a^b S(x)\left(\int_a^x\frac{dt}{S(t)}\int_x^b\frac{dt}{S(t)}\right)dx.
\end{aligned}
\end{equation}
Taking into account the boundary conditions and that the basic Dirichlet spectrum coincides with the Dirichlet spectrum on the base manifold, this finishes the proof of Theorem \ref{teo:sum}. Finally, using the transformation $S\mapsto\frac{1}{S}$, the Corollary \ref{cor:sumneumann} is proved.

%%%%%%%%%%%%%%%%%%%%%%%%%%%%%%%%%%%%%%%%%%%%%%%%%%%%%%%%%%%%%%%%%%%%%%%%%%%%%%%%%%%%%%%%%%%%%%%%%%%%%%%%%%%%%%%%%%%%%%%%%%%%%%%%%%%%%%%%%%%%%%%%
\section{Classical examples of reciprocal basic eigenvalue sums and comparison with constant fiber growth manifolds} \label{Seccomparison}
%%%%%%%%%%%%%%%%%%%%%%%%%%%%%%%%%%%%%%%%%%%%%%%%%%%%%%%%%%%%%%%%%%%%%%%%%%%%%%%%%%%%%%%%%%%%%%%%%%%%%%%%%%%%%%%%%%%%%%%%%%%%%%%%%%%%%%%%%%%%%%%%

This section is dedicated to the computation of the reciprocal sum of the basic eigenvalues of precompact domains in the catenoid and in the pseudosphere. By analyzing a suitable precompact domain in the pseudosphere, we recover, through our framework, a generalized Basel-type summation formula. Furthermore, we establish a comparison criterion with manifolds having constant fiber volume, which provides upper and lower bounds for any Riemannian submersion over an interval with compact fibers of basic mean curvature.

\begin{example}
    We now investigate the reciprocal sum of the basic eigenvalues in several classical models. We first estimate the reciprocal sum of the basic eigenvalues for the symmetric precompact domains in the catenoid and a suitable precompact domain in the pseudosphere. Moreover, we compute the corresponding quantity for spaces with this type of growth in fiber volume.

    \begin{enumerate}
        \item\emph{Truncated catenoid}. The catenoid $\mathcal{C} \subset \mathbb{R}^3$ is a minimal surface of revolution. It can be described by the rotation around the $z$-axis of the catenary curve given by $\alpha(v) = (a \cosh(v/a),0,v)$ where $a>0$ is a constant. Therefore, the truncated catenoid can be parameterized by $\mathbf{x}: [0, 2\pi] \times[-\ell,\ell] \to \mathcal{C}$ 

\begin{equation*}
\mathbf{x}(u,v)= \left( a\cosh{\left( \dfrac{v}{a} \right)\cos{u}}, a\cosh{\left( \dfrac{v}{a} \right)\sin{u}}, v \right).
\end{equation*}
A direct computation gives
\begin{equation*}
    \begin{aligned}
         \mathbf{x}_u(u,v) &= \left( -a \cosh{\left( \frac{v}{a} \right)}\sin{u}, a \cosh{\left( \frac{v}{a} \right)}\cos{u},0 \right), \\
\mathbf{x}_v(u,v)&= \left( \sinh{\left( \frac{v}{a} \right)}\cos{u},\sinh{\left( \frac{v}{a} \right)}\sin{u}, 1 \right).
    \end{aligned}
\end{equation*}
Hence, we obtain the following standard relations
\[
\langle \mathbf{x}_u, \mathbf{x}_u \rangle = a^2\cosh^2{\left( \frac{v}{a} \right)}, \quad \langle \mathbf{x}_v, \mathbf{x}_v \rangle = \cosh^2{\left( \frac{v}{a} \right)}, \quad \langle \mathbf{x}_u, \mathbf{x}_v \rangle = 0.
\]

Therefore, the metric $g_\mathcal{C}$ induced on the catenoid is given by
\begin{equation*}
g_\mathcal{C} = \cosh^2{\left( \frac{v}{a} \right)}dv^2 + a^2\cosh^2{\left( \frac{v}{a} \right)}du^2.
\end{equation*}
To put the metric in warped-product form, we introduce the arc-length
coordinate along the meridian direction:
\[
t = \int_0^v \cosh{\left( \frac{s}{a} \right)}ds = a\sinh{\left( \frac{v}{a} \right)}, 
\quad dt = \cosh{\left( \frac{v}{a} \right)}dv.
\]
Moreover, since $r/a = \sinh{(v/a)}$, we have
\[
\cosh^2{\left( \frac{v}{a} \right)} = 1+ \sinh^2{\left( \frac{v}{a} \right)} = 1+ \left( \frac{t}{a} \right)^2.
\]

Thus, substituting the above change of variables into metric equation above, we find that the truncated catenoid is isometric to the warped product $(\mathcal C,g_{\mathcal C})
\cong [-L,L]\times_{\sqrt{a^2+t^2}}\mathbb S^1$ where $L = a \sinh(\ell/a)$. Since the warping function is given by $w(t) = \sqrt{a^2+t^2}$, by item (1) in Example \ref{expwap} we see that $\mathcal{C}$ admits a Riemannian submersion $\pi: \mathcal{C} \to [-L,L]$ with compact fibers of basic mean curvature and fiber-volume function given by $S(t) = 2\pi \sqrt{a^2 + t^2}$ as illustrated in Figure \ref{fig:catenoid}

\begin{figure}[htb!]
\hfill
 \def\r{7}
	\begin{tikzpicture}[scale=0.61,>=stealth,y=0.6cm]
		\def\u{1}
		\def\v{0.3}
		\def\t{1.7}
		\coordinate (p0) at (0:0cm);
		\coordinate (p2) at ($(p0)+(\u,0)$);
		
		\fill (p0) circle (2pt);
		\draw[dashed,blue] (p2) arc (0:180:1cm and \v cm);
		\draw[line width=1pt,blue] (p2) arc (360:180:1cm and \v cm);

		\draw[domain=-1:1, blue] plot ({\u*pow(cos(\x r),-1)},{3*tan(\x r)});
		
		\draw[domain=-4.135:-2.15, blue] plot ({\u*pow(cos(\x r),-1)},{3*tan(\x r)});
		
		\coordinate (p3) at ($(p0)+(0,-0.7*\t)$);
		
		\draw[line width=1pt, blue] ($(p3)+(1.4,-\t)$) arc (360:180:1.4cm and \v cm);
		
		\draw[dashed, blue] ($(p3)+(1.4,-\t)$) arc (0:180:1.4cm and \v cm);
		
		\draw ($(p0)+(0,6)$) -- ($(p3)+(0,-5)$);
		
		\coordinate (p4) at ($(p0)+(0,-2.727*\t)$);
		
		\fill (p4) circle (2pt);
		
		\draw[line width=1pt] ($(p4)+(1.84,0)$) arc (360:180:1.84cm and \v cm);
		
		\draw[dashed] ($(p4)+(1.84,0)$) arc (0:180:1.84cm and \v cm);
		
		\coordinate (p5) at ($(p0)+(0,2.727*\t)$);
		
		\fill (p5) circle (2pt);
		
		\draw[line width=1pt] ($(p5)+(1.84,0)$) arc (360:180:1.84cm and \v cm);
		
		\draw[dashed] ($(p5)+(1.84,0)$) arc (0:180:1.84cm and \v cm);
		
		\begin{scope}[xshift=-9.0cm]
			\draw (0,2.727*\t) rectangle ($(0,-2.727*\t)+(2,0)$);
			
			\draw[|-|] (-0.4,2.727*\t) node[left] {$\ell$}-- ($(-0.5,-2.727*\t)$) node[left] {$-\ell$};
			
			\draw[|-|] (0,-2.727*\t-0.4) node[below] {0}-- (2,-2.727*\t-0.4) node[below] {$2\pi$};
			
		\end{scope}

		\begin{scope}[xshift=7.5cm]
			\draw[|-|, blue] (0,3.527*\t) node[above right] {$\textcolor{black}{L}$} -- ($(0,-3.527*\t)$) node[below right] {$\textcolor{black}{-L}$};

 \draw[<-] (-5.8,-3.1) -- (-0.3,-4.5)
 node[pos=0.5, sloped, above] {$\pi^{-1}(t)$};

			\draw[|-|] (0,-4.5) node[right] {$t$};
		\end{scope}
		
		\draw[->] (-6.7,0) -- (-2,0) node[pos=0.5,above] {$\mathbf{x}$};
		
		\draw[->] (1.5,0) -- (7,0) node[pos=0.5,above] {$\pi$};
		
		\node (t1) at (-4,-2) {$S(t) = {\rm vol} (\pi^{-1}(t))$};
		\draw[->,shorten <=4pt,shorten >=8pt] ($(p3)+(-1.4,-\t)$) to[out=200,in=-90] (-5,-2);		
	\end{tikzpicture}
    \caption{Fibration of the catenoid $\mathcal{C}$.}
 \label{fig:catenoid}
\end{figure}
We now apply Theorem \ref{teo:sum} to compute the reciprocal sum of the basic spectrum of the truncated catenoid. We define the function 
\[ \theta(x) \coloneq \operatorname{arcsinh}{\left( \frac{x}{a} \right)}\]
and consider the following computation:
\begin{equation*}
 \begin{aligned}\label{Sintegral}
 \int_{-L}^L \frac{1}{S(t)}dt =& \frac{1}{2\pi}\int_{-L}^{L} \frac{1}{\sqrt{a^2 + t^2}} dt \\
 =& \frac{1}{\pi } \theta(L).
 \end{aligned}
\end{equation*}
Using the identity above once again, together with the fact that $\theta$ is an odd function, we have the following
\begin{equation*}
\begin{aligned}\label{integral}
 \int_{-L}^x \frac{1}{S(t)}dt &= \frac{1}{2\pi} \left[ \theta(x) + \theta(L) \right], \\
\int_{x}^{L} \frac{1}{S(t)}dt &= \frac{1}{2\pi} \left[ \theta(L) - \theta(x) \right]. 
\end{aligned}
\end{equation*}
Multiplying the given identities yields 
\[ \left( \int_{-L}^x \frac{1}{S(t)}dt \right) \left( \int_{x}^{L} \frac{1}{S(t)}dt \right) = \frac{1}{4\pi^2} \left[ \theta^2(L) - \theta^2(x) \right].\]
Since $S(x)=2\pi\sqrt{a^2+x^2}$, using the identity above and the change of variables $x=a\sinh\theta$, yields the following computation: 
\begin{equation*}
\begin{aligned}\label{SP}
     \frac{\pi}{a^2}\int_{-L}^LS(x) \left( \left( \int_{-L}^x \frac{1}{S(t)}dt \right) \left( \int_{x}^{L} \frac{1}{S(t)}dt \right) \right)dx &= \frac{1}{3}\theta^3(L)\\ &+ \frac{1}{4}\theta(L)\cosh{\left( 2 \theta(L) \right)} \\
     &-\frac{1}{8}\sinh{\left( 2\theta(L) \right)}.
\end{aligned}
\end{equation*}
Therefore, substituting \eqref{Sintegral} and the identity above into the formula in Theorem \ref{teo:sum}, we obtain
\begin{equation*}\label{boundedspec}
 \begin{aligned}
  \sum_{k=1}^{\infty}\frac{1}{\lambda_k^{\mathcal{D}}(\mathcal{C})} =& \left( \int_{-L}^L\frac{dt}{S(t)} \right)^{-1} \int_{-L}^L 
 S(x)\left(\int_{-L}^x\frac{dt}{S(t)}\int_x^L\frac{dt}{S(t)}\right)dx \\
 =& a^2 \left[ \frac{1}{3}\theta^2(L) +\frac{1}{4}\cosh{\left( 2 \theta(L) \right)} -\frac{1}{8\theta(L)}\sinh{\left( 2\theta(L) \right)}  \right].
 \end{aligned}
\end{equation*}

\item \emph{Pseudosphere}. We consider the \emph{pseudosphere} $\mathcal{S}$, or tractricoid, obtained by rotating a
tractrix around the $z$-axis. Let $\gamma(v) = \left( e^{-\alpha v},0,\psi(v) \right)$ be the generating curve, where
\[
\psi(v) = \frac{1}{\alpha} \left[ \sqrt{ 1- \alpha^2 e^{-2\alpha v}}
+ \log \left( \frac{\alpha e^{-\alpha v}}{1+\sqrt{1-\alpha^2 e^{-2\alpha v}}}\right) \right].
\]
The corresponding surface of revolution is therefore parametrized by
\[
\mathbf{y}(u,v)
= \left( e^{-\alpha v}\cos u, e^{-\alpha v}\sin u, \psi(v) \right),
\qquad v\in[0,2\pi).
\]
It is a classical model surface with constant negative Gaussian curvature  \(K=-\alpha^2\). A direct computation gives
\[
\langle \mathbf{y}_u, \mathbf{y}_u \rangle = e^{-2\alpha v}, \qquad \langle \mathbf{y}_v, \mathbf{y}_v \rangle = 1, \qquad \langle \mathbf{y}_u, \mathbf{y}_v \rangle = 0.
\]

Therefore, the metric $g_\mathcal{S}$ induced on the pseudosphere is given by
\begin{equation*}
g_\mathcal{S} = dv^2 + e^{-2\alpha v}du^2. 
\end{equation*}
Thus, the pseudosphere $(\mathcal{S},g_\mathcal{S})$ is isometric to the warping product $I_\alpha \times_{w} \mathbb{S}^1$ where the interval $I_\alpha = (\log \alpha/\alpha, \infty)$ with the warping function given by $w(v) = e^{-\alpha v}$. If $0<\alpha<1$, then $0\in I_\alpha$. Hence, for every $L>0$, the interval $(0,L)$ is compactly contained in $I_\alpha$, and therefore we may choose the relatively compact domain $\Omega=\pi^{-1}((0,L))\subset \mathcal S$. In this case, $\pi(\Omega)=(0,L)$. Moreover, the fibers are circles of radius $e^{-\alpha t}$, and hence by Example \ref{expwap} item (1) we have $S(t)= {\rm vol}(\pi^{-1}(t))=w(t){\rm vol}(\mathbb{S}^1) = 2\pi e^{-\alpha t}$. The geometric scheme is illustrated in Figure \ref{fighorn}
\begin{figure}[htb!]
 \hfill
 \def\r{7}
	\begin{tikzpicture}[scale=0.6,>=stealth]
		
		\def\v{1}
		\def\k{1}
		\draw[line width=1pt] (\k,\k) arc (0:180:1.0cm and 0.36*\v cm);
		\draw[line width=1pt] (\k,\k) arc (360:180:1.0cm and 0.36*\v cm);
		
		\def\u{4}
		\def\w{6}
		\draw[line width=1pt] (\u,-\w) arc (360:180:4.0cm and 0.36*\v cm);
		\draw[dashed] (\u,-\w) arc (0:180:4.0cm and 0.36*\v cm);
		
		\fill (\k,\k) circle (2pt);
		\fill (\u,-\w) circle (2pt);
		
		\fill (-\k,\k) circle (2pt);
		\fill (-\u,-\w) circle (2pt);
		
		\def\a{40}
		\draw[line width=1pt] (\k,\k) to[out=-90,in=180-\a] (\u,-\w);
		\draw[line width=1pt] (-\k,\k) to[out=270,in=\a] (-\u,-\w);

		\draw[line width=1pt,blue] (1.68,-3) arc (360:180:1.68cm and 0.36*\v cm);
		\draw[dashed,blue] (1.68,-3) arc (0:180:1.68cm and 0.36*\v cm);
		
		\draw[|-|,blue] (-1.5*\k,\k) to[out=270,in=\a] (-1.1*\u,-0.9*\w);

		\draw[|-|] (-9,-3) node[below] {$0$} -- (-6,-3) node[below] {$\mathrm{1}$};
		
		\draw[->] (-7.5,-2.7) to[out=40,in=160] (-2,-1.8);
		\node (t1) at (-5.5,-0.5) {$\beta$};
		
		\draw[|-|, blue] (9,2) node[right] {$\textcolor{black}{L}$} -- (9,-6) node[right] {$\textcolor{black}{0}$};
		
		\draw[<-,blue] (2.0,-3) -- (8.6,-3) node[above,pos=0.5] {$\textcolor{black}{\pi^{-1}(t)}$};
		
		\node[right] (t2) at (9,-3) {$t$};
		
		\draw[->] (1.5,-1) -- (8.6,-1) node[above,pos=0.5] {$\pi$};
	\end{tikzpicture}

 \caption{Precompact domain $\Omega \subset \mathcal{S}$ in the pseudosphere, obtained as the inverse image of the interval $(0,L)$ under the Riemannian submersion $\pi$.}
 \label{fighorn}
 \end{figure}

We may therefore apply Theorem \ref{teo:sum} with $S(t)=2\pi e^{-\alpha t}$ to derive the following formula for the harmonic sum of the basic eigenvalues.
\begin{equation*}\label{formspec1}
 \begin{aligned}
 \sum_{k=1}^{\infty}\frac{1}{\lambda_k^{\mathcal{D}}(\Omega)} =& \ \left( \int_{0}^L\frac{dt}{S(t)} \right)^{-1} \int_0^L S(x)\left(\int_0^x\frac{dt}{S(t)}\int_x^L\frac{dt}{S(t)}\right)dx \\
 =& \ \frac{1}{\alpha^2(e^{\alpha L}-1)}\left( e^{\alpha L}(\alpha L -2) + \alpha L + 2 \right).
 \end{aligned}
\end{equation*}
We now substitute $S(t)=2\pi e^{-\alpha t}$ into the weighted Laplacian formula in \eqref{weightedlaplacian_en}, which produces the following differential equation:
\begin{equation*}\label{laplacianwith}
\left\{\begin{aligned}
f''(t) -\alpha f'(t) +\lambda f(t) &= 0 \quad \text{ for } x \in (0,L),\\
f&= 0 \quad \text{ for } x\in \{0\}\cup \{L\}.
\end{aligned}
\right.
\end{equation*}
The solution to this system is obtained via the Liouville transformation by setting $f(t)=e^{(\alpha/2)t}u(t)$. Consequently, we obtain
\begin{equation*}
\left\{\begin{aligned}
u''(t) + \left(\lambda - \frac{\alpha^2}{4} \right) u(t) &= 0 \quad \text{ for } x \in (0,L),\\
u(t) &= 0 \quad \text{ for } t\in \{0\}\cup \{L\}.
\end{aligned}
\right.
\end{equation*}

The function $u$ inherits the Dirichlet boundary conditions from $f$. 
Hence, equation above admits a nontrivial solution if and only if $\lambda-\alpha^2/4>0$. Moreover, a solution of the weighted problem above, up to a multiplicative constant, is given by $f(t)=e^{(\alpha/2)t}\sin((n\pi/L)t)$. 
In this case, the existence of a nontrivial solution satisfying the Dirichlet boundary conditions requires that
\begin{equation*}
 \lambda_k = \frac{\alpha^2}{4} + \frac{k^2 \pi^2 }{L^2} \cdot
\end{equation*}
Therefore, summing over $k$ the reciprocals of the values obtained and comparing with the basic eigenvalue sum obtained previously yields the following formula for the generalized Basel-type series:
\begin{equation*}
\begin{aligned}\label{Basel}
 \sum_{k=1}^{\infty} \dfrac{1}{(\alpha L)^2 + (2 k \pi )^2} =& \frac{1}{4L^2}\sum_{k=1}^{\infty}\frac{1}{\lambda_k} \\
 =& \frac{1}{4L^2} \left(  \sum_{k=1}^{\infty}\frac{1}{\lambda_k^{\mathcal{D}}(\Omega)} \right) \\
 =& \frac{1}{4 (\alpha L)^2(e^{\alpha L}-1)}\left( e^{\alpha L}(\alpha L -2) + \alpha L + 2 \right) \\
 =& \frac{e^{\alpha L}+1}{4 (\alpha L)(e^{\alpha L}-1)}-\frac{1}{2(\alpha L)^2}\\
 =& \frac{1}{4\alpha L}\coth{\left( \frac{\alpha L}{2} \right)} - \frac{1}{2(\alpha L)^2} \cdot
 \end{aligned}
 \end{equation*}
Since $L>0$ can be chosen arbitrarily, the change of variables $z=\alpha L/(2\pi)$ ranges over positive values. Therefore, the identity in \eqref{Basel}, which holds for every $0<\alpha<1$, recovers the Mittag-Leffler expansion of $\pi\coth(\pi z)$ after this change of variables; see \cite[Chapter 7]{Ahlfors1979}.
\end{enumerate}
\end{example}

We now focus on establishing criteria for explicit bounds on the fiber volume function, obtaining a uniform comparison criterion for the sum of the inverses of the basic eigenvalues.

\begin{theorem}\label{thm:comparison} Let $M, N_1$ and $N_2$ be Riemannian manifolds. Suppose that there exist Riemannian submersions $\pi \colon M \to [a,b]$, $\pi_1\colon N_1 \to [a,b]$ and $\pi_2 \colon N_2 \to [a,b]$ with compact fibers of basic mean curvature and fiber volumes given by $S_M(t)=\operatorname{vol}(\pi^{-1}(t))$, $S_{N_1}(t)=\operatorname{vol}(\pi_1^{-1}(t))$, $S_{N_2}(t)=\operatorname{vol}(\pi_2^{-1}(t))$. Assume that there exist constants $\alpha, \beta \in (0,\infty)$ such that $S_{N_1}(t)=\alpha$, $S_{N_2}(t)=\beta$ and $\alpha \leq S(t) \leq \beta$. Then the following statements hold:
\begin{enumerate}
 \item $\displaystyle  \sum_{k=1}^{\infty}\frac{1}{\lambda_k^{\mathcal{D}}(\Omega_{N_1})} =  \sum_{k=1}^{\infty}\frac{1}{\lambda_k^{\mathcal{D}}(\Omega_{N_2})}$;
 \item $\displaystyle\left( \dfrac{\alpha}{\beta} \right)^2 \left(  \sum_{k=1}^{\infty}\frac{1}{\lambda_k^{\mathcal{D}}(\Omega_N)} \right) \leq  \sum_{k=1}^{\infty}\frac{1}{\lambda_k^{\mathcal{D}}(\Omega_M)} \leq \left( \dfrac{\beta}{\alpha} \right)^2 \left(  \sum_{k=1}^{\infty}\frac{1}{\lambda_k^{\mathcal{D}}(\Omega_N)}\right)$.
\end{enumerate}
Here $\Omega_M = \pi^{-1}([a,b])$ and $\Omega_{N_i} = \pi_i^{-1}([a,b])$ for $i=1,2$.
\end{theorem}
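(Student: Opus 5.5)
The whole argument runs through the explicit formula of Theorem~\ref{teo:sum}. For a fiber volume $S$ on $[a,b]$ write
\[
\Sigma(S):=\left(\int_a^b\frac{dt}{S(t)}\right)^{-1}\int_a^b S(x)\left(\int_a^x\frac{dt}{S(t)}\int_x^b\frac{dt}{S(t)}\right)dx ,
\]
so that $\sum_k 1/\lambda_k^{\mathcal D}(\Omega)=\Sigma(S)$ for each of $M,N_1,N_2$. The plan is first to show that $\Sigma$ is invariant under rescaling of $S$, which gives item (1), and then to compare the three factors of $\Sigma(S_M)$ with those of a constant weight using $\alpha\le S_M\le\beta$, which gives item (2).

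\emph{Step 1 (scaling invariance).} If $S$ is replaced by $cS$ with $c>0$, the prefactor gets multiplied by $c$. The factor $S(x)$ also gets multiplied by $c$, while the product of the two inner integrals gets multiplied by $c^{-2}$. Hence $\Sigma(cS)=\Sigma(S)$. For a constant weight $S\equiv\gamma$ the formula reduces to
\[
\frac{1}{b-a}\int_a^b (x-a)(b-x)\,dx=\frac{(b-a)^2}{6},
\]
independently of $\gamma$. This is consistent with the Basel computation in the introduction. Item (1) follows at once, and we write $\Sigma_N:=(b-a)^2/6$ for this common value; this is the quantity denoted $\sum_k 1/\lambda_k^{\mathcal D}(\Omega_N)$ in item (2).

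\emph{Step 2 (two-sided bounds).} From $\alpha\le S_M\le\beta$ we obtain the following estimates:
\[
\frac{b-a}{\beta}\le\int_a^b\frac{dt}{S_M}\le\frac{b-a}{\alpha},
\qquad
\frac{x-a}{\beta}\le\int_a^x\frac{dt}{S_M}\le\frac{x-a}{\alpha},
\qquad
\frac{b-x}{\beta}\le\int_x^b\frac{dt}{S_M}\le\frac{b-x}{\alpha}.
\]
For the upper bound on $\Sigma(S_M)$ we use $S_M(x)\le\beta$, the inner product $\le (x-a)(b-x)/\alpha^2$, and the inverse prefactor $\le \beta/(b-a)$. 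This gives $\Sigma(S_M)\le(\beta^3/\alpha^2)\cdot\frac{1}{b-a}\int_a^b(x-a)(b-x)\,dx$, which is a factor $\beta/\alpha$ worse than the claimed constant $(\beta/\alpha)^2$.

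\emph{Main obstacle: recovering the sharp exponent.} Bounding the three factors independently loses one power of $\beta/\alpha$, so the factors must not be estimated separately. The plan is to use the scaling invariance of Step~1 to normalize $S_M$ before estimating. Replace $S_M$ by $S_M/\beta$, which takes values in $[\alpha/\beta,1]$ and has the same $\Sigma$. Then $S_M(x)/\beta\le 1$, the inner product is at most $(\beta/\alpha)^2(x-a)(b-x)$, and the inverse prefactor is at most $1/(b-a)$. This yields $\Sigma(S_M)\le(\beta/\alpha)^2\Sigma_N$.

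For the lower bound, normalize by $\alpha$ instead, so that $S_M/\alpha\in[1,\beta/\alpha]$. Then $S_M(x)/\alpha\ge 1$, the inner product is at least $(\alpha/\beta)^2(x-a)(b-x)$, and the prefactor $\bigl(\int_a^b \alpha\,dt/S_M\bigr)^{-1}\ge 1/(b-a)$. This gives $\Sigma(S_M)\ge(\alpha/\beta)^2\Sigma_N$. In both directions, after normalization two of the three factors are bounded with constant $1$, so only the inner product contributes, and it contributes exactly the square of $\beta/\alpha$. Checking that the normalization really produces these exponents is the delicate point of the proof. The remaining verifications (positivity of all the integrands, and finiteness of the sums, which is already guaranteed by Theorem~\ref{teo:sum}) are routine.
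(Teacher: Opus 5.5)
Your proof is correct and is essentially the paper's argument: item (1) follows from Theorem~\ref{teo:sum} and the constant-weight value $(b-a)^2/6$, and item (2) from bounding the prefactor, the weight $S_M(x)$ and the product of inner integrals separately using $\alpha\le S_M\le\beta$. The only flaw is your ``main obstacle'', which comes from an arithmetic slip: the direct upper bound is $\frac{\beta}{b-a}\cdot\beta\cdot\alpha^{-2}\int_a^b(x-a)(b-x)\,dx=(\beta/\alpha)^2\,\frac{(b-a)^2}{6}$, not $\beta^3/\alpha^2$ times the normalized integral (a bound of degree one in $(\alpha,\beta)$ could not hold for a scale-invariant quantity), so no power is lost, the paper uses the three direct estimates as they stand, and your normalization by $\beta$ or $\alpha$ is a harmless rewriting of exactly those estimates.
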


\begin{proof} For the proof of item (1) it suffices to note that if $S_{N_i}(t) = \alpha_i$ for some $\alpha_i \in (0,\mathbb{R})$, then by Theorem \ref{teo:sum} we have 
\begin{equation}\label{uniformsum}
 \sum_{k=1}^{\infty}\frac{1}{\lambda_k^{\mathcal{D}}(\Omega_{N_i})} = \frac{(b-a)^2}{6} \cdot
\end{equation}
Therefore, the reciprocal sum of the basic eigenvalues does not depend on the constant $\alpha_i$ chosen initially. For the proof of item (2), since there exist constants $\alpha, \beta \in (0,\infty)$ such that $\alpha \leq S(t) \leq \beta$, together with \eqref{uniformsum}, it follows from the formula given in Theorem \ref{teo:sum} that
\begin{equation*}
 \begin{aligned}
\sum_{k=1}^{\infty}\frac{1}{\lambda_k^{\mathcal{D}}(\Omega_M)} &= \left(\int_{a}^b\frac{dt}{S_M(t)}\right)^{-1}\int_{a}^b S_M(x)\left(\int_{a}^x\frac{dt}{S_M(t)}\int_x^b\frac{dt}{S_M(t)}\right)dx \\
 &\geq\left(\int_{a}^b\frac{dt}{\alpha}\right)^{-1}\int_{a}^b \alpha \left(\int_{a}^x\frac{dt}{\beta}\int_x^b\frac{dt}{\beta}\right)dx \\
 &= \left( \frac{\alpha}{\beta} \right)^2 \dfrac{1}{(b-a)}\int_{a}^b (x-a)(b-x)dx \\
 &= \left( \frac{\alpha}{\beta} \right)^2 \left( \sum_{k=1}^{\infty}\frac{1}{\lambda_k^{\mathcal{D}}(\Omega_N)} \right).
 \end{aligned}
\end{equation*}
The index $i$ in $N_i$ can be omitted due to item (1). The upper bound in item (2) follows by symmetry, exchanging the roles of $\alpha$ and $\beta$.
\end{proof}

The above theorem allows us to state the following corollary.

\begin{corollary}
Let $M$ be a Riemannian manifold. Let $\Omega\subset M$ be a bounded precompact domain with smooth boundary. Suppose that there exists a Riemannian submersion $\pi \colon \Omega \to [a, b]$ with compact fibers of a basic mean curvature vector field. Assume that there exist constants $\alpha, \beta \in (0,\infty)$ such that $\alpha \leq S(t) \leq \beta$. Then:
$$
\lambda_{1}^{\mathcal{D}}(M)\geq \left(\frac{\alpha}{\beta}\right)^2\frac{6}{(b-a)^{2}} \cdot
$$ 
\end{corollary}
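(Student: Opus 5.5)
The plan is to combine item (2) of Theorem \ref{thm:comparison} with the trivial fact that a convergent series of positive terms dominates each of its terms. I read $\lambda_1^{\mathcal D}(M)$ in the statement as the first Dirichlet eigenvalue of the domain $\Omega=\pi^{-1}([a,b])$, since that is the object the hypotheses control.

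First I will apply Theorem \ref{teo:sum} to $\Omega$. It expresses $\sum_{k\ge1}1/\lambda_k^{\mathcal D}(\Omega)$, the sum over the basic Dirichlet spectrum, through the fiber volume $S$. Next I will use the upper bound in item (2) of Theorem \ref{thm:comparison}, together with \eqref{uniformsum}, to get
$$
\sum_{k=1}^\infty \frac{1}{\lambda_{k,\rm basic}^{\mathcal D}(\Omega)}\le \left(\frac{\beta}{\alpha}\right)^2\frac{(b-a)^2}{6}.
$$
This comparison is stated for submersions with constant fiber volume, but it only uses the formula of Theorem \ref{teo:sum}. So we do not need auxiliary manifolds $N_i$ to exist; \eqref{uniformsum} is just the value of that formula for constant $S$. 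Since all eigenvalues are positive, each term is bounded by the whole sum:
$$
\frac{1}{\lambda^{\mathcal D}_{1,\rm basic}(\Omega)}\le \sum_{k}\frac{1}{\lambda^{\mathcal D}_{k,\rm basic}(\Omega)}.
$$
Inverting then gives $\lambda^{\mathcal D}_{1,\rm basic}(\Omega)\ge (\alpha/\beta)^2\, 6/(b-a)^2$.

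It remains to pass from the basic eigenvalue to the full first Dirichlet eigenvalue. For this I will use the argument already given in the proof of Corollary \ref{PAYNE}. The first Dirichlet eigenfunction of $\Omega$ is simple and does not change sign. Any fiber-preserving isometry, or averaging over fibers via the decomposition $L^2=\mathcal E_c\oplus\mathcal E_0$ of \eqref{eq:EcE0}, maps it into $\mathcal E_c$. More precisely, its projection onto $\mathcal E_c$ is nonzero because it has constant sign and the fiber integrals are positive. Since the decomposition reduces $\triangle_M$, this projection is again an eigenfunction with the same eigenvalue, so $\lambda_1^{\mathcal D}(\Omega)=\lambda^{\mathcal D}_{1,\rm basic}(\Omega)$.

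The main obstacle is this last identification, and in particular making sure the reducing decomposition is compatible with Dirichlet boundary conditions. This should hold because $\partial\Omega$ is a union of fibers, so fiber-averaging preserves vanishing on the boundary. The spectral bound itself is immediate from the earlier results.
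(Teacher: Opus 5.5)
Your proposal is correct and matches the paper's argument, which the paper leaves implicit after Theorem \ref{thm:comparison}: the upper bound in item (2) (really Theorem \ref{teo:sum} compared against constant weights) gives $\sum_k 1/\lambda_{k,\mathrm{basic}}^{\mathcal D}(\Omega)\le(\beta/\alpha)^2(b-a)^2/6$, a single term is bounded by the sum, and $\lambda_1^{\mathcal D}(\Omega)=\lambda_{1,\mathrm{basic}}^{\mathcal D}(\Omega)$; you are also right to read $\lambda_1^{\mathcal D}(M)$ as $\lambda_1^{\mathcal D}(\Omega)$, since for a larger $M$ domain monotonicity goes the wrong way. The one difference is the final identification: the proof of Corollary \ref{PAYNE} argues in the opposite direction (the lifted first basic eigenfunction is positive inside $\Omega$, so it cannot be orthogonal to the ground state), which avoids checking that fiber averaging commutes with the Dirichlet Laplacian, though your projection argument is also valid because $\partial\Omega$ is a union of fibers and $\vec H$ is basic.
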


\subsection*{Acknowledgements:}
The authors would like to thank Universitat Jaume I (UJI), where this research was carried out. The second author gratefully acknowledges the Universidade Federal do Ceará (UFC) for its institutional support, which made his doctoral research stay at UJI possible. The authors also thank \emph{G. Pacelli Bessa} for valuable conversations and insights related to this work, and \emph{Vicente Miquel} for providing notes on Riemannian submersions. The first author was partially supported by the project AICO/2023/035 funded
by Conselleria d'Educació, Cultura, Universitats i Ocupació. The second author was partially supported by CAPES-Brazil under Grant No. 88881.126989/2025-01.

\def\cprime{$'$} \def\polhk#1{\setbox0=\hbox{#1}{\ooalign{\hidewidth \lower1.5ex\hbox{`}\hidewidth\crcr\unhbox0}}} \def\polhk#1{\setbox0=\hbox{#1}{\ooalign{\hidewidth \lower1.5ex\hbox{`}\hidewidth\crcr\unhbox0}}} \def\polhk#1{\setbox0=\hbox{#1}{\ooalign{\hidewidth \lower1.5ex\hbox{`}\hidewidth\crcr\unhbox0}}} \def\cprime{$'$} \def\cprime{$'$} \def\cprime{$'$} \def\cprime{$'$} \def\cprime{$'$}

\end{document}